\pgfplotsset{compat=newest}
\algorithmic\endcsname{\itemsep\z@}{\itemsep=3pt}{}{}
\pgfplotsset{
	discard if not/.style 2 args={
		x filter/.code={
			\edef\tempa{\thisrow{#1}}
			\edef\tempb{#2}
			\ifx\tempa\tempb
			\else
			
			\fi
		}
	}
}
\newtheorem{lemma}{Lemma}
\newtheorem{theorem}{Theorem}
\newtheorem{corollary}{Corollary}
\theoremstyle{definition}
\newtheorem{definition}{Definition}
\author{Riley Badenbroek \and Etienne de Klerk}
\title{Simulated annealing with hit-and-run for convex optimization: rigorous complexity analysis and practical perspectives for copositive programming}
\DeclareMathOperator{\diff}{d\!}
\DeclareMathOperator{\smat}{smat}
\DeclareMathOperator{\svec}{svec}
\DeclareMathOperator{\Diag}{Diag}
\algnewcommand\Input{\item[\bf Input:] }
\algnewcommand\Output{\item[\bf Output:] }
\newcommand\algmultiline[1]{\parbox[t]{\dimexpr\textwidth-\leftmargin-\labelsep-\labelwidth}{#1\strut}}
\newcommand\algmultilinetwo[1]{\parbox[t]{\dimexpr\textwidth-\leftmargin-\labelsep-\labelwidth-\leftmargin}{#1\strut}}
\pgfplotsset{yticklabel style={text width=1.5em,align=right}, ylabel near ticks}
\begin{document}

\maketitle

\begin{abstract}
	We give a rigorous complexity analysis of the simulated annealing algorithm by Kalai and Vempala
[Math of OR 31.2 (2006): 253-266] using the type of temperature update
 suggested by Abernethy and Hazan [arXiv 1507.02528v2, 2015].
 The algorithm only assumes a membership oracle of the feasible set, and we prove that it returns a solution in polynomial time
  which is near-optimal with high probability.
	Moreover, we propose a number of modifications to improve the practical performance of this method,
 and present some numerical results for test problems from copositive programming.
 \end{abstract}

\noindent
{ \bf Keywords:} {Simulated annealing, convex optimization, hit-and-run sampling, semidefinite and copositive programming}

\section{Introduction}
Let $K \subseteq \mathbb{R}^n$ be a convex body, and suppose that only a membership oracle of $K$ is available. Fix a vector $c \in \mathbb{R}^n$, and let $\langle \cdot, \cdot \rangle$ be an inner product on $\mathbb{R}^n$. We are interested in the problem
\begin{equation}
	\min_{x \in K} \langle c, x \rangle.
	\label{eq:MainProblem}
\end{equation}
One approach to solve problems of this type is using simulated annealing, a paradigm of randomized algorithms for general optimization introduced by Kirkpatrick et al. \cite{kirkpatrick1983optimization}. It features a so-called temperature parameter that decreases during the run of the simulated annealing algorithm. At high temperatures, the method explores the feasible set relatively freely, also moving to solutions that have worse objective values than the current point. As the temperature decreases, so does the probability that a solution with a worse objective value is accepted.
Kalai and Vempala \cite{kalai2006simulated} showed that, for convex optimization, a polynomial-time simulated annealing algorithm exists. Specifically, their algorithm returns a feasible solution that is near-optimal with high probability in polynomial time.

Abernethy and Hazan \cite{abernethy2015faster} recently clarified that Kalai and Vempala's algorithm is in some sense equivalent to an interior point method. In general, interior point methods for convex bodies require a so-called self-concordant barrier of the feasible set. It was shown by Nesterov and Nemirovskii \cite{nesterov1994interior} that every open convex set that does not contain an affine subspace is the domain of a self-concordant barrier, known as the universal barrier. However, it is not known how to compute the gradients and Hessians of this barrier in general.

The interior point method to which Kalai and Vempala's algorithm corresponds uses the so-called entropic barrier over $K$, to be defined later. This barrier was introduced by Bubeck and Eldan \cite{bubeck2018entropic}, who also established the self-concordance of the barrier and analyzed its complexity parameter $\vartheta$.

Drawing on the connection to interior point methods, Abernethy and Hazan \cite{abernethy2015faster} proposed a new temperature schedule for Kalai and Vempala's algorithm. This schedule does not depend on the dimension $n$ of the problem, but on the complexity parameter $\vartheta$ of the entropic barrier.
Our goal is to prove in detail that simulated annealing with this new temperature schedule returns a solution in polynomial time which is near-optimal with high probability. Moreover, we aim to investigate the practical applicability of this method.

\subsection{Algorithm Statement}
Central to Kalai and Vempala's algorithm is a family of exponential-type probability distributions known as Boltzmann distributions.
\begin{definition}
	Let $K \subseteq \mathbb{R}^n$ be a convex body, and let $\theta \in \mathbb{R}^n$. Let $\langle \cdot, \cdot \rangle$ be the Euclidean inner product. Then, the \emph{Boltzmann distribution with parameter $\theta$} is the probability distribution supported on $K$ having density with respect to the Lebesgue measure proportional to $x \mapsto e^{\langle \theta, x \rangle}$.
\end{definition}
Throughout this work, we will use $\Sigma(\theta)$ to refer to the covariance matrix of the Boltzmann distribution with parameter $\theta \in \mathbb{R}^n$. If $\langle \cdot, \cdot \rangle$ is some reference inner product, then $\langle x, y \rangle_\theta := \langle x, \Sigma(\theta) y \rangle$ for any $\theta \in \mathbb{R}^n$. Moreover, let $\| \cdot \|_\theta$ denote the norm induced by the inner product $\langle \cdot, \cdot \rangle_\theta$.

As mentioned above, Abernethy and Hazan's temperature schedule depends on the complexity parameter of the entropic barrier. This function is defined as follows.
\begin{definition}
	Let $K \subseteq \mathbb{R}^n$ be a convex body. Define the function $f: \mathbb{R}^n \to \mathbb{R}$ as $f(\theta) = \ln \int_K e^{\langle \theta, x \rangle} \diff x$. Then, the convex conjugate $f^*$ of $f$,
	\begin{equation*}
		f^*(x) = \sup_{\theta \in \mathbb{R}^n} \left\{ \langle \theta, x \rangle - f(\theta) \right\},
	\end{equation*}
	is called the \emph{entropic barrier} for $K$.
\end{definition}
Bubeck and Eldan \cite{bubeck2018entropic} showed that $f^*$ is a self-concordant barrier for $K$ with complexity parameter $\vartheta \leq n +o(n)$. The complexity parameter of $f^*$ is
\begin{equation}
	\vartheta := \sup_{x \in K} \langle Df^*(x), [D^2 f^*(x)]^{-1} Df^*(x) \rangle = \sup_{\theta \in \mathbb{R}^n} \langle \theta, \Sigma(\theta) \theta \rangle = \sup_{\theta \in \mathbb{R}^n} \| \theta \|_\theta^2,
	\label{eq:ComplexityParameter}
\end{equation}
where we refer the reader to \cite[inequality \eqref{ext1-eq:UpperBoundLocalTheta}]{badenbroek2018complexity} for details.

The procedure Kalai and Vempala \cite{kalai2006simulated} use to generate samples on $K$ is called \emph{hit-and-run} sampling. This Markov Chain Monte Carlo method was introduced by Smith \cite{smith1984efficient} to sample from the uniform distribution over a bounded convex set. Later, it was generalized to absolutely continuous distributions (see for example B\'elisle et al. \cite{belisle1993hit}).
The details of hit-and-run sampling are given in Algorithm \ref{alg:HitAndRun}.

\begin{algorithm}[!ht]
	\begin{algorithmic}[1]
		\Input Oracle for a convex body $K \subseteq \mathbb{R}^n$; probability distribution $\mu$ to sample from
(i.e. the target distribution); covariance matrix $\Sigma \in \mathbb{R}^{n \times n}$; starting point $x \in K$;
number of hit-and-run steps $\ell \in \mathbb{N}$.
		\Statex
		
		\State $X_0 \gets x$
		\For{$i \in \{1, ..., \ell \}$}
				\State Sample direction $D_i$ from a $\mathcal{N}(0, \Sigma)$-distribution
				\State Sample $X_i$ from the univariate distribution $\mu$ restricted to $K \cap \{X_{i-1} + t D_i : t \in \mathbb{R} \}$
		\label{line:SampleLineSegment}
		\EndFor
		\State \Return $X_\ell$
	\end{algorithmic}
	\caption{The hit-and-run sampling procedure}
	\label{alg:HitAndRun}
\end{algorithm}

The algorithm by Kalai and Vempala \cite{kalai2006simulated} that uses a temperature schedule of the type introduced by Abernethy and Hazan \cite{abernethy2015faster} is now given in Algorithm \ref{alg:KalaiVempala}.
  Note in particular that the temperature parameter in iteration $k$ of Algorithm \ref{alg:KalaiVempala} is given by $T_k$.

\begin{algorithm}[ht!]
	\caption{Algorithm by Kalai and Vempala \cite{kalai2006simulated} using temperature schedule of type introduced by Abernethy and Hazan \cite{abernethy2015faster}}
	\label{alg:KalaiVempala}
	\begin{algorithmic}[1]
		\Input
		unit vector $c \in \mathbb{R}^n$; membership oracle $\mathcal{O}_K: \mathbb{R}^n \to \{0,1\}$ of a convex body $K$; radius $R$ of Euclidean ball containing $K$;
		complexity parameter $\vartheta \leq n + o(n)$ of the entropic barrier over $K$;
		$x_0 \in K$ drawn randomly from the uniform distribution over $K$; update parameter $\alpha > 1+ 1/\sqrt{\vartheta}$;
		number of phases $m \in \mathbb{N}$; number of hit-and-run steps $\ell \in \mathbb{N}$; number of covariance update samples $ N \in \mathbb{N}$; approximation $\widehat{\Sigma}(0)$ of
$\Sigma(0)$ satisfying $\tfrac{1}{2} v^\top \widehat{\Sigma}(0) v \leq v^\top \Sigma(0) v \leq 2 v^\top \widehat{\Sigma}(0) v$ for all $v \in \mathbb{R}^n$; starting temperature $T_0 = R$.
		\Output $x_m$ such that $\langle c, x_m \rangle - \min_{x \in K} \langle c, x \rangle \leq \bar{\epsilon}$ at terminal iteration $m$.
		\Statex
		
		\State $X_0 \gets x_0$
		\State $\theta_0 = 0$
		\For{$k \in \{1, ..., m\}$}
		\State  $T_{k} \gets \left(1 - \frac{1}{\alpha \sqrt{\vartheta}}\right)T_{k-1}$
		\State $\theta_k \gets -c / T_k$
		\State \algmultiline{Generate $X_k$ by applying hit-and-run sampling to the Boltzmann distribution with parameter $\theta_k$, starting the walk from $X_{k-1}$, taking $\ell$ steps, drawing directions from a $\mathcal{N}(0, \widehat{\Sigma}(\theta_{k-1}))$-distribution}
		\For{$j \in \{1, ..., N\}$} 
			\State \algmultilinetwo{Generate $Y_{jk}$ by applying hit-and-run sampling to the Boltzmann distribution with parameter $\theta_k$, starting the walk from $X_{k-1}$, taking $\ell$ steps, drawing directions from a $\mathcal{N}(0, \widehat{\Sigma}(\theta_{k-1}))$-distribution}
		\EndFor
		\State $\widehat{\Sigma}(\theta_k) \gets \frac{1}{N} \sum_{j=1}^{N} Y_{jk} Y_{jk}^\top - \left( \frac{1}{N} \sum_{j=1}^{N} Y_{jk} \right) \left( \frac{1}{N} \sum_{j=1}^{N} Y_{jk} \right)^\top$ \label{line:EmpiricalCovariance}
		\EndFor
		\State \Return $X_m$
	\end{algorithmic}
\end{algorithm}

In their original paper, Kalai and Vempala \cite{kalai2006simulated} show that the algorithm returns a near-optimal solution
 with high probability, for the temperature update (cf.\ line 4 in Algorithm \ref{alg:KalaiVempala})
 \begin{equation}\label{temp update Kalai Vempala}
   T_{k} = \left(1-\frac{1}{\sqrt{n}}\right)T_{k-1},
 \end{equation}
 in $m = O^*(\sqrt{n})$ iterations,
  where $O^*$ suppresses polylogarithmic terms in the problem parameters. Abernethy and Hazan \cite{abernethy2015faster} propose the alternative
  temperature update
  \begin{equation}\label{AH temp schedule}
     T_{k} =  \left(1 - \frac{1}{4 \sqrt{\vartheta}}\right)T_{k-1},
  \end{equation}
  which will lead to
   $m = O^*(\sqrt{\vartheta})$ iterations. As noted above, we have $\vartheta \leq n +o(n)$ in general, but
   it is not currently known if $\vartheta < n$ for any convex bodies.
   In particular, the temperature update \eqref{AH temp schedule} only improves on \eqref{temp update Kalai Vempala} if $\vartheta < n/16$, which is not known to hold for any convex body.
      We show in Appendix \ref{appendix:complexity parameter ball} to this paper that, for the Euclidean unit ball in $\mathbb{R}^n$, numerical evidence suggests that $\vartheta = (n+1)/2$.
      We therefore consider a variation on the temperature schedule \eqref{AH temp schedule} suggested by Abernethy and Hazan, namely
    \begin{equation}
    \label{AH type temp schedule}
    T_{k} = \left(1 - \frac{1}{\alpha \sqrt{\vartheta}}\right)T_{k-1} \mbox{ for some } \alpha > 1+ 1/\sqrt{\vartheta},
    \end{equation}
     which corresponds to \eqref{AH temp schedule} when $\alpha = 4$, but gives larger temperature reductions when $\alpha < 4$.
     We will refer to \eqref{AH type temp schedule} as
     Abernethy-Hazan-type temperature updates. If $\vartheta < n$, this results in a larger temperature decrease than
     the Kalai and Vempala \cite{kalai2006simulated} scheme \eqref{temp update Kalai Vempala}, for a suitable choice of the parameter $\alpha$.

\subsection{Outline of this paper}
    Abernethy and Hazan \cite{abernethy2015faster} do not give a rigorous analysis of the temperature schedule \eqref{AH temp schedule} in their paper,
     only a sketch of the proof.
    In this paper we provide the full details for the more general schedule \eqref{AH type temp schedule}.
    We start with a review of useful facts on probability distributions in Section \ref{sec:Preliminaries on probability distributions},
    followed by a section on mixing conditions for hit-and-run sampling in Section \ref{sec:Mixing Conditions}. In Section \ref{sec:Proof of Convergence}
     we give the main analysis
    on the rate of convergence of Algorithm \ref{alg:KalaiVempala}. In doing so, we also provide some details that were omitted in the
    original work by Kalai and Vempala \cite{kalai2006simulated}, that concerns the application of a theorem by Rudelson \cite{rudelson1999random}.
    In the remainder of the paper, we discuss the perspectives for practical computation with Algorithm \ref{alg:KalaiVempala}.
    In Section \ref{sec:Numerical Examples on the Doubly Nonnegative Cone}, we look at the behavior of hit-and-run sampling for optimization over
    the doubly nonnegative cone, and suggest some heuristic improvements to obtain speed-up.
    We then evaluate the resulting algorithm on problems from copositive programming (due to Berman et al.\ \cite{berman2015open})
    in Section \ref{sec:Numerical Examples on the Copositive Cone}.

\section{Preliminaries on probability distributions}
\label{sec:Preliminaries on probability distributions}
Below, we will define some tools necessary for the analysis of Kalai and Vempala's algorithm with the type of temperature schedule by Abernethy and Hazan. We start with the notion of isotropy.
\begin{definition}
	\label{def:NearIsotropy}
	Let $(K, \mathcal{E})$ be a measurable space with $K \subseteq \mathbb{R}^n$, and $\epsilon \geq 0$. Let $\langle \cdot, \cdot \rangle$ be the Euclidean inner product. A probability distribution $\mu$ over $(K, \mathcal{E})$ is \emph{$(1+\epsilon)$-isotropic} if for every $v \in \mathbb{R}^{n}$,
	\begin{equation*}
		\frac{1}{1+\epsilon} \| v \|^2 \leq \int_{K} \langle v, x - \mathbb{E}_\mu[X]\rangle^2 \diff \mu(x) \leq (1+\epsilon) \|v\|^2.
	\end{equation*}
\end{definition}

Moreover, we will need two measures of divergence between probability distributions. Before we can define them, we recall the definition of absolute continuity.
\begin{definition}
	Let $(K, \mathcal{E})$ be a measurable space, and let $\nu$ and $\mu$ be measures on this space. Then, $\nu$ is \emph{absolutely continuous} with respect to $\mu$ if $\mu(A) = 0$ implies $\nu(A) = 0$ for all $A \in \mathcal{E}$.
	We write this property as $\nu \ll \mu$.
\end{definition}

The first measure of divergence between probability distributions is the $L_2$-norm.
\begin{definition}
	Let $(K, \mathcal{E})$ be a measurable space. Let $\nu$ and $\mu$ be two probability distributions over this space, such that $\nu \ll \mu$. Then,
	the \emph{$L_2$-norm of $\nu$ with respect to $\mu$} is
	\begin{equation*}
		\|\nu / \mu \| := \int_{K} \frac{\diff \nu}{\diff \mu} \diff \nu = \int_{K} \left(\frac{\diff \nu}{\diff \mu} \right)^2 \diff \mu,
	\end{equation*}
	where $\frac{\diff \nu}{\diff \mu}$ is the Radon-Nikodym derivative of $\mu$ with respect to $\nu$.
\end{definition}
It is easily shown that the $L_2$-norm is invariant under invertible affine transformations.
\begin{lemma}
	\label{lemma:TransformedL2Norm}
	Let $(K, \mathcal{E})$ be a measurable space, and suppose $K \subseteq \mathbb{R}^n$. Let $\nu$ and $\mu$ be two probability measures over this space having densities $h_\nu$ and $h_\mu$ with respect to the Lebesgue measure, respectively. Let $A: \mathbb{R}^n \to \mathbb{R}^n$ be an invertible linear operator and write $\overline{K} := \{ Ax : x \in K \}$. Define the transformed probability density $\overline{h}_\nu: \overline{K} \to \mathbb{R}_+$ by $\overline{h}_\nu(y) = \det(A^{-1}) h_\nu(A^{-1} y)$, and similarly for $\overline{h}_\mu$. Denote their induced distributions over $\overline{K}$ by $\overline{\nu}$ and $\overline{\mu}$. Then,
	\begin{equation*}
		\| \nu / \mu \| = \| \overline{\nu} / \overline{\mu} \|.
	\end{equation*}
\end{lemma}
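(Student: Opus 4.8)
The plan is to reduce the $L_2$-norm to an ordinary Lebesgue integral of the densities and then invoke the change-of-variables formula. First I would rewrite $\| \nu/\mu \|$ in terms of $h_\nu$ and $h_\mu$. Since $\nu \ll \mu$ and both are absolutely continuous with respect to the Lebesgue measure, the Radon--Nikodym derivative satisfies $\frac{\diff \nu}{\diff \mu}(x) = h_\nu(x)/h_\mu(x)$ for Lebesgue-almost every $x$ with $h_\mu(x) > 0$, while $h_\nu$ vanishes almost everywhere on $\{ h_\mu = 0 \}$ by absolute continuity. Hence
\[
	\| \nu/\mu \| = \int_K \left( \frac{\diff \nu}{\diff \mu} \right)^2 \diff \mu = \int_{\{ x \in K \,:\, h_\mu(x) > 0 \}} \frac{h_\nu(x)^2}{h_\mu(x)} \diff x ,
\]
with the convention that the integrand is taken to be zero wherever $h_\nu$ vanishes.

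Next I would verify that the right-hand side of the claimed identity is even defined, i.e. that $\overline{\nu} \ll \overline{\mu}$. This is immediate: if $\overline{\mu}(B) = 0$ for some measurable $B \subseteq \overline{K}$, then $\mu(A^{-1} B) = 0$ because $A$ is invertible, hence $\nu(A^{-1} B) = 0$, that is, $\overline{\nu}(B) = 0$. Applying exactly the same computation as above to the transformed densities then gives
\[
	\| \overline{\nu}/\overline{\mu} \| = \int_{\{ y \in \overline{K} \,:\, \overline{h}_\mu(y) > 0 \}} \frac{\overline{h}_\nu(y)^2}{\overline{h}_\mu(y)} \diff y .
\]

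Finally I would apply the substitution $y = Ax$, which maps $K$ bijectively onto $\overline{K}$ (and $\{ h_\mu > 0 \}$ onto $\{ \overline{h}_\mu > 0 \}$) with $\diff y = |\det A| \, \diff x$. Substituting $\overline{h}_\nu(Ax) = \det(A^{-1}) h_\nu(x)$ and $\overline{h}_\mu(Ax) = \det(A^{-1}) h_\mu(x)$ yields
\[
	\| \overline{\nu}/\overline{\mu} \| = \int_{\{ x \in K \,:\, h_\mu(x) > 0 \}} \frac{\det(A^{-1})^2 \, h_\nu(x)^2}{\det(A^{-1}) \, h_\mu(x)} \, |\det A| \, \diff x = \int_{\{ x \in K \,:\, h_\mu(x) > 0 \}} \frac{h_\nu(x)^2}{h_\mu(x)} \, \diff x ,
\]
using $\det(A^{-1}) \, |\det A| = 1$, and the last integral is precisely $\| \nu/\mu \|$. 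I do not expect any genuine difficulty here: the only points requiring a little care are the treatment of the null set $\{ h_\mu = 0 \}$ (handled by absolute continuity, so the integrand may be set to zero there and the region of integration restricted accordingly) and the bookkeeping of the two Jacobian factors $\det(A^{-1})$ and $|\det A|$, which cancel exactly. The same argument goes through verbatim if $A$ is an invertible affine map rather than linear, since a translation contributes a trivial Jacobian.
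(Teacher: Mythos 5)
The paper states this lemma without proof (``It is easily shown that\ldots''), so there is no paper argument to compare against; your change-of-variables computation is exactly the standard argument the authors presumably had in mind, and it is correct. One small remark for precision: the paper writes $\det(A^{-1})$ rather than $|\det(A^{-1})|$ in the definition of $\overline{h}_\nu$, so for $\overline{h}_\nu$ to be a genuine nonnegative density and for your cancellation $\det(A^{-1})\,|\det A| = 1$ to go through, one should read this as the absolute value (or restrict to $\det A > 0$, which holds in the paper's application where $A = \widehat{\Sigma}(\theta_0)^{-1/2}$ is positive definite); with that reading your proof is complete.
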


The second measure of divergence between probability distributions is the total variation distance.
\begin{definition}
	\label{def:TotalVariationDistance}
	Let $(K, \mathcal{E})$ be a measurable space. For two probability distributions $\mu$ and $\nu$ over this space, their \emph{total variation distance} is
	\begin{equation*}
		\|\mu - \nu \| := \sup_{A \in \mathcal{E}} | \mu(A) - \nu(A)|.
	\end{equation*}
\end{definition}
A useful property of the total variation distance is that it allows coupling of random variables, in the sense of the following lemma.
\begin{lemma}[e.g. Proposition 4.7 in \cite{levin2017markov}]
	Let $X$ be a
	random variable on $K \subseteq \mathbb{R}^n$
	with distribution $\mu$, and let $\nu$ be a different probability distribution on $K$.
	If $\|\mu - \nu\| = \alpha$, there exists a random variable $Y$ on $K$ distributed according to $\nu$ such that the joint distribution of $X$ and $Y$ satisfies $\mathbb{P}\{X = Y\} = 1-\alpha$.
	\label{lemma:DivineIntervention}
\end{lemma}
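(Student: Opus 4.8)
This is the classical \emph{maximal coupling} of two probability measures, and the plan is to give the explicit construction. The boundary cases $\alpha \in \{0,1\}$ are immediate (if $\alpha = 0$ take $Y := X$; if $\alpha = 1$ the measures are mutually singular and any $Y \sim \nu$ built from fresh independent randomness works), so assume $0 < \alpha < 1$. Fix a common dominating measure, for instance $\lambda := \mu + \nu$, and let $f := \diff\mu/\diff\lambda$ and $g := \diff\nu/\diff\lambda$. The first step is the identity
\begin{equation*}
	\|\mu - \nu\| \;=\; \int_K \bigl(f - \min\{f,g\}\bigr)\,\diff\lambda \;=\; 1 - \int_K \min\{f,g\}\,\diff\lambda,
\end{equation*}
which follows because the supremum defining $\|\mu-\nu\|$ in Definition~\ref{def:TotalVariationDistance} is attained at $A^\star := \{x \in K : f(x) \ge g(x)\}$, together with $\int_K f\,\diff\lambda = 1$. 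Hence $\int_K \min\{f,g\}\,\diff\lambda = 1-\alpha$, and by symmetry $\int_K \max\{g-f,0\}\,\diff\lambda = \int_K(g - \min\{f,g\})\,\diff\lambda = \alpha$, so $p := \max\{g-f,0\}/\alpha$ is a probability density with respect to $\lambda$.

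Next I would enlarge the probability space carrying $X$ so that it also carries a $\mathrm{Uniform}[0,1]$ variable $U$ and a variable $Z$ with density $p$ with respect to $\lambda$, with $X$, $U$, $Z$ mutually independent. Define
\begin{equation*}
	Y := \begin{cases} X, & \text{if } U \le \min\{1,\, g(X)/f(X)\}, \\[2pt] Z, & \text{otherwise,} \end{cases}
\end{equation*}
where $g/f$ is read as $+\infty$ on the $\mu$-null set $\{f = 0\}$. Call the first event the ``keep'' event and the second the ``resample'' event; note $Y \in K$ almost surely since $\lambda$ is supported on $K$.

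Then I would verify the two required properties. Conditioning on $X$, the keep event has probability $\mathbb{E}\bigl[\min\{1, g(X)/f(X)\}\bigr] = \int_K \min\{1,g/f\}\,f\,\diff\lambda = \int_K \min\{f,g\}\,\diff\lambda = 1-\alpha$, so the resample event has probability $\alpha$. Hence for any $A \in \mathcal{E}$,
\begin{equation*}
	\mathbb{P}\{Y \in A\} = \int_A \min\{f,g\}\,\diff\lambda \;+\; \alpha\!\int_A p\,\diff\lambda = \int_A \min\{f,g\}\,\diff\lambda + \int_A \max\{g-f,0\}\,\diff\lambda = \int_A g\,\diff\lambda = \nu(A),
\end{equation*}
so $Y \sim \nu$. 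For the coincidence probability: on the keep event $X = Y$ by definition, which already gives $\mathbb{P}\{X = Y\} \ge 1-\alpha$; on the resample event, conditioning shows $X$ has density proportional to $\max\{f-g,0\}$ and is thus supported on $\{f > g\}$, while $Z$ has density $p$ supported on $\{g > f\}$, and $Z$ is independent of $X$ and of the resample event, so $\mathbb{P}\{X = Z \mid \text{resample}\} = 0$. Therefore $\{X = Y\}$ agrees up to a null set with the keep event, and $\mathbb{P}\{X = Y\} = 1-\alpha$ as claimed. (Alternatively, the reverse inequality follows from the elementary half of the coupling inequality: $\mu(A) - \nu(A) = \mathbb{P}\{X \in A\} - \mathbb{P}\{Y \in A\} \le \mathbb{P}\{X \ne Y\}$, and taking the supremum over $A$ gives $\mathbb{P}\{X \ne Y\} \ge \|\mu-\nu\| = \alpha$.)

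The only place where care is genuinely required is the measure-theoretic bookkeeping: that $\min\{f,g\}$ and $p$ are well defined $\lambda$-a.e., that the convention $g/f = +\infty$ on $\{f=0\}$ is harmless because that set is $\mu$-null, and that the enlargement of the probability space to carry the independent variables $U$ and $Z$ is legitimate (this enlargement is in general unavoidable, since the original space might carry only $X$). Everything else is a one-line computation. Since this is a standard fact — precisely Proposition~4.7 of \cite{levin2017markov} — in the paper it suffices to cite it, but the construction above is the self-contained argument.
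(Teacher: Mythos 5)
Your proof is correct. Note, however, that the paper does not prove this lemma at all — it simply cites Proposition~4.7 of \cite{levin2017markov} and moves on, so there is no in-paper argument to compare against; what you have written is a self-contained reconstruction of the standard maximal-coupling argument from that reference. Your variant is the ``retention'' form of the construction (keep $X$ with probability $\min\{1,g(X)/f(X)\}$, else resample from the residual density $\max\{g-f,0\}/\alpha$), which is slightly better adapted to the statement as phrased here than the usual Levin--Peres--Wilmer ``coin-flip first, then draw $W$ from $\min\{f,g\}/(1-\alpha)$ or the two residuals'' version, because the lemma takes $X$ as \emph{given} and asks you to build $Y$ on an enlargement of its probability space rather than to construct the pair $(X,Y)$ from scratch. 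Both establish $\mathbb{P}\{X=Y\}=1-\alpha$ and $Y\sim\nu$; the only thing I would flag is that your coincidence argument already gives equality (disjoint supports of the conditional law of $X$ and of $Z$ on the resample event), so the parenthetical appeal to the coupling inequality is a redundancy rather than a needed step — harmless, but worth noticing.
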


\section{Mixing Conditions}
\label{sec:Mixing Conditions}
The main purpose of this section is to study how we can use hit-and-run sampling to approximate covariance matrices of Boltzmann distributions as in Line \ref{line:EmpiricalCovariance} of Algorithm \ref{alg:KalaiVempala}. Our first step is showing that hit-and-run indeed mixes in our setting. While Theorem \ref{ext1-thm:MixingTimeSC} in \cite{badenbroek2018complexity} gives conditions under which it can be done, it depends on an approximation of an \emph{inverse} covariance matrix. In Kalai and Vempala's algorithm, it is more convenient to maintain that we have an approximation of a covariance matrix, not necessarily its inverse. We therefore state the following mixing theorem.
\begin{theorem}
	\label{thm:MixingTimeSCCov}
	Let $K \subseteq \mathbb{R}^n$ be a convex body, and let $\langle \cdot, \cdot \rangle$ be the Euclidean inner product. Let $q > 0$, and $\theta_0, \theta_1 \in \mathbb{R}^n$ such that $\Delta \theta := \max\{ \| \theta_1 - \theta_0 \|_{\theta_0}, \| \theta_1 - \theta_0 \|_{\theta_1} \} < 1$. Define $\Delta\theta_0 := \| \theta_1 - \theta_0 \|_{\theta_0}$. Pick $\epsilon \geq 0$, and suppose we have an invertible matrix $\widehat{\Sigma}(\theta_0)$ such that
	\begin{equation}
		\frac{1}{1+\epsilon} v^\top \widehat{\Sigma}(\theta_0) v \leq v^\top \Sigma(\theta_0) v \leq (1+\epsilon) v^\top \widehat{\Sigma}(\theta_0) v \qquad \forall v \in \mathbb{R}^n. \label{eq:HitAndRunCovConditionGeneral}
	\end{equation}
	Consider a hit-and-run random walk applied to the Boltzmann distribution $\mu$ with parameter $\theta_1$ from a random starting point drawn from a Boltzmann distribution with parameter $\theta_0$.
	Let $\nu^{(\ell)}$ be the distribution of the hit-and-run point after $\ell$ steps of hit-and-run sampling applied to $\mu$, where the directions are drawn from a $\mathcal{N}(0,\widehat{\Sigma}(\theta_0))$-distribution. Then, after
	\begin{equation}
		\label{eq:WalkLength}
		\ell = \left\lceil \frac{16384 e^2 10^{30} n^3 (1+\epsilon)^2 }{(1-\Delta\theta)^4 \exp(4 \Delta\theta)}
		\ln^2 \left( \frac{256 \exp(-2\Delta\theta_0) n\sqrt{n}(1+\epsilon) }{(1-\Delta\theta_0)^2 (1- \Delta\theta)^2 \exp(2 \Delta\theta ) q^2} \right)
		\ln^3 \left( \frac{2 \exp(-2 \Delta\theta_0)}{(1-\Delta\theta_0)^2 q^2} \right) \right\rceil,
	\end{equation}
	hit-and-run steps, we have $\| \nu^{(\ell)} - \mu \| \leq q$.
\end{theorem}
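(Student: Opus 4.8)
The plan is to deduce the statement from Theorem~\ref{ext1-thm:MixingTimeSC} of \cite{badenbroek2018complexity} by passing from a covariance estimate to its inverse, so that essentially the only remaining work is to check that this substitution changes nothing of substance.

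First I would rewrite the hypothesis~\eqref{eq:HitAndRunCovConditionGeneral} as the Loewner sandwich $\tfrac{1}{1+\epsilon}\widehat{\Sigma}(\theta_0) \preceq \Sigma(\theta_0) \preceq (1+\epsilon)\widehat{\Sigma}(\theta_0)$. Since $X \mapsto X^{-1}$ reverses the Loewner order on the cone of positive definite matrices, this is equivalent to
\[
	\frac{1}{1+\epsilon}\,\Sigma(\theta_0)^{-1} \preceq \widehat{\Sigma}(\theta_0)^{-1} \preceq (1+\epsilon)\,\Sigma(\theta_0)^{-1},
\]
so that $\widehat{\Sigma}(\theta_0)^{-1}$ is a $(1+\epsilon)$-accurate approximation of the \emph{inverse} covariance $\Sigma(\theta_0)^{-1}$ in exactly the form demanded by Theorem~\ref{ext1-thm:MixingTimeSC}. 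Moreover, drawing hit-and-run directions from $\mathcal{N}(0,\widehat{\Sigma}(\theta_0))$ is the same as drawing them from $\mathcal{N}(0,(\widehat{\Sigma}(\theta_0)^{-1})^{-1})$, which is precisely the direction law used there. Hence the Markov chain in the present statement is literally the one analysed in \cite{badenbroek2018complexity} with inverse-covariance estimate $\widehat{\Sigma}(\theta_0)^{-1}$, and the remaining hypotheses already coincide: the condition $\Delta\theta < 1$, the role of $\Delta\theta_0 = \|\theta_1-\theta_0\|_{\theta_0}$, and the assumption that the walk starts from a draw of the Boltzmann distribution with parameter $\theta_0$.

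Next I would recall why starting from the $\theta_0$-Boltzmann distribution is a warm start for the $\theta_1$-Boltzmann target $\mu$. Writing $\mu_{\theta}$ for the Boltzmann distribution with parameter $\theta$, a direct computation with $f(\theta) = \ln\int_K e^{\langle\theta,x\rangle}\,\diff x$ gives
\[
	\|\mu_{\theta_0}/\mu_{\theta_1}\| = \exp\!\left(f(\theta_1) + f(2\theta_0 - \theta_1) - 2 f(\theta_0)\right) \leq \frac{\exp(-2\Delta\theta_0)}{(1-\Delta\theta_0)^2},
\]
where the inequality is the standard self-concordant bound on a symmetric second difference of $f$, valid because $\Delta\theta_0 \leq \Delta\theta < 1$; this is the estimate responsible for the $\exp(-2\Delta\theta_0)$ and $(1-\Delta\theta_0)^2$ factors appearing inside the logarithms of~\eqref{eq:WalkLength}. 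Applying Theorem~\ref{ext1-thm:MixingTimeSC} with target total-variation accuracy $q$ and inverse-covariance accuracy $\epsilon$, and then collecting the absolute constants, the self-concordance corrections in $\Delta\theta$, the factor $n^3$, and the nested logarithmic and log-log factors, reproduces the walk length~\eqref{eq:WalkLength}; the one point worth double-checking is that $\epsilon$ enters the transcribed bound through $(1+\epsilon)^2$, which it does because the accuracy of the inverse-covariance estimate degrades the squared rounding constant of the walk in Theorem~\ref{ext1-thm:MixingTimeSC}.

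I do not expect a serious obstacle: the genuine content --- the $O^*(n^3)$ conductance estimate for hit-and-run from a warm start on a near-isotropic logconcave density, and the warmness bound between consecutive Boltzmann distributions --- lives inside Theorem~\ref{ext1-thm:MixingTimeSC}. The main task here is bookkeeping, namely verifying that inverting the spectral sandwich preserves the accuracy constant and leaves the Gaussian direction law unchanged, and then transcribing the explicit bound carefully enough that every power of $(1+\epsilon)$, $(1-\Delta\theta)$, $\exp(\Delta\theta)$, $n$, and each iterated logarithm carries over verbatim.
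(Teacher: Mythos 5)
Your plan of inverting the Loewner sandwich
\begin{equation*}
\tfrac{1}{1+\epsilon}\widehat{\Sigma}(\theta_0) \preceq \Sigma(\theta_0) \preceq (1+\epsilon)\widehat{\Sigma}(\theta_0)
\quad\Longleftrightarrow\quad
\tfrac{1}{1+\epsilon}\Sigma(\theta_0)^{-1} \preceq \widehat{\Sigma}(\theta_0)^{-1} \preceq (1+\epsilon)\Sigma(\theta_0)^{-1}
\end{equation*}
and then invoking Theorem~\ref{ext1-thm:MixingTimeSC} directly is a genuinely different route from the paper's, and the inversion observation itself is correct. The paper does \emph{not} argue via Theorem~\ref{ext1-thm:MixingTimeSC}; instead it verifies from scratch the three hypotheses of Corollary~\ref{ext1-cor:MixingTime}, by (i) showing that~\eqref{eq:HitAndRunCovConditionGeneral} is equivalent to the transformed Boltzmann distribution $\overline{\nu}$ (parameter $\widehat{\Sigma}(\theta_0)^{1/2}\theta_0$ over $\widehat{\Sigma}(\theta_0)^{-1/2}K$) being $(1+\epsilon)$-isotropic, (ii) propagating this to $t$-isotropy of the \emph{target} $\overline{\mu}$ via Kalai--Vempala's Lemma~4.3 with $t = 16(1+\epsilon)\max\{\|\nu/\mu\|,\|\mu/\nu\|\}$, (iii) bounding $t$ by Lemma~\ref{ext1-lemma:L2UpperBound}, and (iv) invoking Kalai--Vempala Lemma~4.2 for the ball-containment and second-moment conditions. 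Your route is shorter if Theorem~\ref{ext1-thm:MixingTimeSC} already packages steps (ii)--(iv) with exactly the constants needed; the paper's route is self-contained and shows directly why $\epsilon$ enters as $(1+\epsilon)^2$ and where each $\Delta\theta$- and $\Delta\theta_0$-factor originates.

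The gap in your sketch is precisely that you have not verified what Theorem~\ref{ext1-thm:MixingTimeSC} actually demands and delivers. You assert that ``the genuine content \dots lives inside Theorem~\ref{ext1-thm:MixingTimeSC}'' and that the transcription reproduces~\eqref{eq:WalkLength} ``verbatim,'' but the only step you actually check is the Loewner inversion and the warm-start $L_2$-bound; the claim that the conductance estimate, the near-isotropy propagation from $\theta_0$ to $\theta_1$, and the numerical constant $16384\,e^2\,10^{30}$ all fall out unchanged is not established. In particular, Theorem~\ref{ext1-thm:MixingTimeSC} and Corollary~\ref{ext1-cor:MixingTime} may be stated under somewhat different hypotheses or with different constant bookkeeping; the fact that the paper opted to re-derive the bound from the corollary's conditions rather than cite the theorem is a hint that the shortcut does not hand you~\eqref{eq:WalkLength} for free. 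To turn your sketch into a proof you would need to quote Theorem~\ref{ext1-thm:MixingTimeSC} in full, confirm that drawing directions from $\mathcal{N}(0,\widehat{\Sigma}(\theta_0))$ matches its direction law after the inversion, and then carry the explicit constants through, at which point you would likely have reproduced most of the paper's argument via Lemmas~4.2 and~4.3 of Kalai--Vempala anyway.
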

\begin{proof}
	To show the claim, we will prove that conditions \eqref{ext1-con:ContainedBall}-\eqref{ext1-con:LocalVariance} from Corollary \ref{ext1-cor:MixingTime} in \cite{badenbroek2018complexity} are satisfied. As in Kalai and Vempala \cite{kalai2006simulated}, the three conditions will follow if the distribution to sample from, i.e. $\mu$, is near-isotropic after $K$ is transformed by $\widehat{\Sigma}(\theta_0)^{-1/2}$.
	
	Denote the Boltzmann distributions over $K$ with parameter $\theta_0$ by $\nu$.
	With $v = \widehat{\Sigma}(\theta_0)^{-1/2} u$, \eqref{eq:HitAndRunCovConditionGeneral} is equivalent to $\frac{1}{1+\epsilon} \| u \|^2 \leq u^\top \widehat{\Sigma}(\theta_0)^{-1/2} \Sigma(\theta_0) \widehat{\Sigma}(\theta_0)^{-1/2} u \leq (1+\epsilon) \| u \|^2$.
	In other words,
	\begin{equation*}
		\frac{1}{1+\epsilon} \| u \|^2 \leq \frac{\int_K \left[u^\top \widehat{\Sigma}(\theta_0)^{-1/2} (x - \mathbb{E}_{\theta_0}[X])\right]^2 e^{\langle \theta_0, x \rangle} \diff x}{\int_K e^{\langle \theta_0, x \rangle} \diff x} \leq (1+\epsilon) \| u \|^2.
	\end{equation*}
	With a change of variables $y = \widehat{\Sigma}(\theta_0)^{-1/2} x$, this result is equivalent to
	\begin{equation*}
		\frac{1}{1+\epsilon} \| u \|^2 \leq \frac{\int_{\widehat{\Sigma}(\theta_0)^{-1/2} K} \left[u^\top (y - \widehat{\Sigma}(\theta_0)^{1/2} \mathbb{E}_{ \theta_0}[X])\right]^2 e^{\langle \widehat{\Sigma}(\theta_0)^{1/2} \theta_0,  y \rangle} \diff y}{\int_{\widehat{\Sigma}(\theta_0)^{-1/2} K} e^{\langle \widehat{\Sigma}(\theta_0)^{1/2} \theta_0, y \rangle} \diff y} \leq (1+\epsilon) \| u \|^2,
	\end{equation*}
	where $\widehat{\Sigma}(\theta_0)^{1/2} \mathbb{E}_{\theta_0}[X]$ equals
	\begin{equation*}
		\widehat{\Sigma}(\theta_0)^{-1/2} \mathbb{E}_{\theta_0}[X] = \frac{\int_K \widehat{\Sigma}(\theta_0)^{-1/2} x e^{\langle \theta_0, x \rangle} \diff x}{ \int_K e^{\langle \theta_0, x \rangle} \diff x }
		= \frac{\int_{\widehat{\Sigma}(\theta_0)^{-1/2} K} y e^{\langle \widehat{\Sigma}(\theta_0)^{1/2} \theta_0, y \rangle} \diff y}{ \int_{\widehat{\Sigma}(\theta_0)^{-1/2} K} e^{\langle \widehat{\Sigma}(\theta_0)^{1/2} \theta_0, y \rangle} \diff y }.
	\end{equation*}
	Hence, \eqref{eq:HitAndRunCovConditionGeneral} is equivalent to the statement that the Boltzmann distribution with parameter $\widehat{\Sigma}(\theta_0)^{1/2} \theta_0$ over $\widehat{\Sigma}(\theta_0)^{-1/2} K$ is in $(1+\epsilon)$-isotropic position. Let us refer to this distribution as $\overline{\nu}$, and to the Boltzmann distribution over the same body with parameter $\widehat{\Sigma}(\theta_0)^{1/2} \theta_1$ as $\overline{\mu}$.
	By Lemma 4.3 from Kalai and Vempala \cite{kalai2006simulated},
	$\overline{\mu}$ is $t$-isotropic with
	\begin{equation*}
		t = 16 (1+\epsilon) \max\{ \| \overline{\nu} / \overline{\mu} \|, \| \overline{\mu} / \overline{\nu} \| \}
		= 16 (1+\epsilon) \max\{ \| \nu / \mu \|, \| \mu / \nu \| \},
	\end{equation*}
	where the final equality is due to Lemma \ref{lemma:TransformedL2Norm}. To upper bound the $L_2$-norms between $\mu$ and $\nu$, we use the fact that they are Boltzmann distributions.
	By Lemma \ref{ext1-lemma:L2UpperBound} in \cite{badenbroek2018complexity}, the value of $t$ can therefore be bounded by
	\begin{equation*}
		t \leq 16 (1+\epsilon) \frac{\exp(-2 \max\{ \| \theta_1 - \theta_0 \|_{\theta_0}, \| \theta_1 - \theta_0 \|_{\theta_1} \} )}{(1 - \max\{ \| \theta_1 - \theta_0 \|_{\theta_0}, \| \theta_1 - \theta_0 \|_{\theta_1} \} )^2}
		= 16 (1+\epsilon) \frac{\exp(-2 \Delta\theta )}{(1 - \Delta\theta)^2}.
	\end{equation*}
	
	Kalai and Vempala \cite[Lemma 4.2]{kalai2006simulated} show that the level set of $\overline{\mu}$ with probability $\frac{1}{8}$ contains a Euclidean ball of radius $\frac{1}{8e \sqrt{t}}$ and $\mathbb{E}_{\overline{\mu}}[\| Y - \mathbb{E}_{\overline{\mu}}[Y] \|^2] \leq t n$. Transforming back to $K$, we have that the level set of $\mu$ with probability $\frac{1}{8}$ contains a $\| \cdot \|_{\widehat{\Sigma}(\theta_0)^{-1}}$-ball with radius $\frac{1}{8e \sqrt{t}}$ and $\mathbb{E}_{\mu}[\| X - \mathbb{E}_{\mu}[X] \|^2_{\widehat{\Sigma}(\theta_0)^{-1}}] \leq tn$.
	Since Lemma \ref{ext1-lemma:L2UpperBound} in \cite{badenbroek2018complexity} can be used to upper bound $\| \nu / \mu\|$, all conditions for Corollary \ref{ext1-cor:MixingTime} in \cite{badenbroek2018complexity} are satisfied, which proves the statement.
\end{proof}

In iteration $k$ of Algorithm \ref{alg:KalaiVempala}, one uses the temperature update \eqref{AH type temp schedule},
as well as sampling from the Boltzmann distribution with parameter $\theta_k := -c / T_k$. For these parameters, we can derive the following corollary to Theorem \ref{thm:MixingTimeSCCov}.

\begin{corollary}
	\label{cor:MixingTimeSA}
	Let $K \subseteq \mathbb{R}^n$ be a convex body, and let $\langle \cdot, \cdot \rangle$ be the Euclidean inner product. Let $q > 0$, and $\theta_0, \theta_1 \in \mathbb{R}^n$ such that $\theta_1 = \theta_0/(1 - \frac{1}{\alpha\sqrt{\vartheta}})$, where $\vartheta$ is the complexity parameter of the entropic barrier over $K$, and $\alpha > 1+1/\sqrt{\vartheta}$.
	Pick $\epsilon \geq 0$, and suppose we have an invertible matrix $\widehat{\Sigma}(\theta_0)$ such that
	\begin{equation}
		\frac{1}{1+\epsilon} v^\top \widehat{\Sigma}(\theta_0) v \leq v^\top \Sigma(\theta_0) v \leq (1+\epsilon) v^\top \widehat{\Sigma}(\theta_0) v \qquad \forall v \in \mathbb{R}^n.
		\label{eq:HitAndRunCovCondition}
	\end{equation}
	Consider a hit-and-run random walk
	applied to the Boltzmann distribution $\mu$ with parameter $\theta_1$ from a random starting point drawn from a Boltzmann distribution with parameter $\theta_0$.
	Let $\nu^{(\ell)}$ be the distribution of the hit-and-run point after $\ell$ steps of hit-and-run sampling applied to $\mu$, where the directions are drawn from a $\mathcal{N}(0,\widehat{\Sigma}(\theta_0))$-distribution. Then, after
	\begin{align}
		\begin{aligned}
			\ell &= \Bigg\lceil \frac{16384 e^2 10^{30} n^3 (1+\epsilon)^2 (\alpha \sqrt{\vartheta} -1)^4 }{((\alpha-1)\sqrt{\vartheta} - 1)^4 \exp(4 \sqrt{\vartheta} / (\alpha \sqrt{\vartheta} - 1))}
			\ln^2 \left( \frac{256 \exp(-2\sqrt{\vartheta} / (\alpha \sqrt{\vartheta} - 1)) n\sqrt{n}(1+\epsilon) (\alpha \sqrt{\vartheta} - 1)^4 }{((\alpha-1)\sqrt{\vartheta} - 1)^4 \exp(2 \sqrt{\vartheta} / (\alpha \sqrt{\vartheta} - 1) ) q^2} \right)\\
			&\times \ln^3 \left( \frac{2 \exp(-2 \sqrt{\vartheta} / (\alpha \sqrt{\vartheta} - 1)) (\alpha \sqrt{\vartheta} -1)^2 }{((\alpha-1)\sqrt{\vartheta} - 1)^2 q^2} \right) \Bigg\rceil,
		\end{aligned}
		\label{eq:WalkLengthSA}
	\end{align}
	hit-and-run steps, we have $\| \nu^{(\ell)} - \mu \| \leq q$.
\end{corollary}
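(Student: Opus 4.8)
The plan is to obtain Corollary~\ref{cor:MixingTimeSA} as the instance of Theorem~\ref{thm:MixingTimeSCCov} corresponding to the Abernethy-Hazan-type update $\theta_1 = \theta_0/(1 - \tfrac{1}{\alpha\sqrt{\vartheta}})$. The covariance hypothesis \eqref{eq:HitAndRunCovCondition} is verbatim the hypothesis \eqref{eq:HitAndRunCovConditionGeneral} of the theorem, so the only genuine work is (i) to bound the step sizes $\Delta\theta$ and $\Delta\theta_0$ appearing in the theorem in terms of $\vartheta$ and $\alpha$, and (ii) to feed those bounds into the walk length \eqref{eq:WalkLength}.

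For (i), I would first note that $\theta_1 - \theta_0 = \theta_0 \big( \tfrac{1}{1 - 1/(\alpha\sqrt{\vartheta})} - 1 \big) = \theta_0/(\alpha\sqrt{\vartheta} - 1)$, so $\|\theta_1 - \theta_0\|_{\theta_0} = \|\theta_0\|_{\theta_0}/(\alpha\sqrt{\vartheta} - 1)$; since $\|\theta_0\|_{\theta_0}^2 \le \vartheta$ by the definition \eqref{eq:ComplexityParameter} of the complexity parameter, this is at most $\sqrt{\vartheta}/(\alpha\sqrt{\vartheta} - 1)$. Symmetrically, writing $\theta_0 = (1 - \tfrac{1}{\alpha\sqrt{\vartheta}})\theta_1$ gives $\|\theta_1 - \theta_0\|_{\theta_1} = \tfrac{1 - 1/(\alpha\sqrt{\vartheta})}{\alpha\sqrt{\vartheta} - 1}\,\|\theta_1\|_{\theta_1}$, which is again at most $\sqrt{\vartheta}/(\alpha\sqrt{\vartheta} - 1)$ by \eqref{eq:ComplexityParameter}. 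Hence $\Delta\theta_0$ and $\Delta\theta$ are both bounded by $\delta := \sqrt{\vartheta}/(\alpha\sqrt{\vartheta} - 1)$, and the standing assumption $\alpha > 1 + 1/\sqrt{\vartheta}$ is precisely equivalent to $\delta < 1$, so the requirement $\Delta\theta < 1$ of Theorem~\ref{thm:MixingTimeSCCov} is met.

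For (ii), the theorem already yields $\|\nu^{(\ell^\star)} - \mu\| \le q$, where $\ell^\star$ is the right-hand side of \eqref{eq:WalkLength} evaluated at the true $\Delta\theta, \Delta\theta_0$; it remains to replace these by $\delta$ without making the bound false. I would check that the right-hand side of \eqref{eq:WalkLength} is nondecreasing in $\Delta\theta$ and in $\Delta\theta_0$ on $[0,1)$: the logarithmic derivative of $(1-t)^{-4}e^{-4t}$ equals $4t/(1-t) \ge 0$, and the same computation shows $(1-t)^{-2}e^{-2t}$ is nondecreasing, which handles both the prefactor and the arguments of the two logarithms; those arguments are moreover $\ge 1$ in the only nontrivial regime $q \le 1$, so monotonicity survives squaring and cubing the logs and taking the ceiling. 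Substituting $\Delta\theta = \Delta\theta_0 = \delta$ and using the identity $1 - \delta = \big((\alpha-1)\sqrt{\vartheta} - 1\big)/(\alpha\sqrt{\vartheta} - 1)$ turns \eqref{eq:WalkLength} into exactly the expression \eqref{eq:WalkLengthSA}, so the $\ell$ of the corollary satisfies $\ell \ge \ell^\star$. Finally, since hit-and-run targeting $\mu$ with symmetric (centered Gaussian) directions is reversible with respect to $\mu$, the measure $\mu$ is stationary for the walk and the total variation distance to $\mu$ is nonincreasing along it; hence $\|\nu^{(\ell)} - \mu\| \le \|\nu^{(\ell^\star)} - \mu\| \le q$, which is the claim.

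The argument is essentially routine; the only point that needs a little care is the bookkeeping in (ii) --- verifying the monotonicity that licenses passing from the instance-dependent length $\ell^\star$ to the stated uniform length \eqref{eq:WalkLengthSA}, together with the standard remark that running extra hit-and-run steps cannot increase the distance to stationarity.
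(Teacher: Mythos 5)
Your proof is correct and matches the paper's: bound both $\|\theta_1-\theta_0\|_{\theta_0}$ and $\|\theta_1-\theta_0\|_{\theta_1}$ by $\sqrt{\vartheta}/(\alpha\sqrt{\vartheta}-1) < 1$ using \eqref{eq:ComplexityParameter}, then substitute these upper bounds into Theorem~\ref{thm:MixingTimeSCCov}. The monotonicity check on \eqref{eq:WalkLength} and the total-variation contraction remark you supply are precisely what is needed to license replacing the exact step sizes by their upper bounds; the paper leaves both implicit behind the one-line ``for these upper bounds, Theorem~\ref{thm:MixingTimeSCCov} shows the result.''
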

\begin{proof}
	Note that by \eqref{eq:ComplexityParameter}, 
	\begin{equation*}
		\| \theta_{1} - \theta_0 \|_{\theta_0} = \left( \frac{\alpha \sqrt{\vartheta}}{\alpha \sqrt{\vartheta} - 1} - 1 \right) \| \theta_0 \|_{\theta_0} \leq \left( \frac{\alpha \sqrt{\vartheta}}{\alpha \sqrt{\vartheta} - 1} - 1 \right) \sqrt{\vartheta} = \frac{\sqrt{\vartheta}}{\alpha \sqrt{\vartheta}-1} < 1,
	\end{equation*}
	and similarly,
	\begin{equation*}
		\| \theta_{1} - \theta_0 \|_{\theta_1} = \frac{1}{\alpha\sqrt{\vartheta}} \| \theta_{1} \|_{\theta_{1}} \leq \frac{1}{\alpha \sqrt{\vartheta}} \sqrt{\vartheta} = \frac{1}{\alpha} \leq \frac{\sqrt{\vartheta}}{\alpha \sqrt{\vartheta}-1} < 1.
	\end{equation*}
	For these upper bounds, Theorem \ref{thm:MixingTimeSCCov} shows the result.
\end{proof}

In other words, the temperature scheme by Abernethy and Hazan allows mixing with a path length that has the same asymptotic complexity as the path length by Kalai and Vempala. This result shows that the $X_k$ and $Y_{jk}$ samples in Algorithm \ref{alg:KalaiVempala} approximately follow the correct distribution, if the path length $\ell$ is chosen as in \eqref{eq:WalkLengthSA}.


The main condition that needs to be satisfied before Corollary \ref{cor:MixingTimeSA} guarantees mixing is \eqref{eq:HitAndRunCovCondition}. As one would expect, approximating the covariance matrix of some distribution is possible if one can sample from this distribution.
However, with hit-and-run, we cannot sample exactly from the correct distribution, and the samples we do generate are not independent. Theorem \ref{ext1-thm:CovQuality} in \cite{badenbroek2018complexity} covers these pitfalls, but it requires a guarantee that the random walks have mixed well enough with the target distribution. Corollary \ref{cor:MixingTimeSA} provides such a guarantee. Letting $\lambda_{\min}(A)$ denote the smallest eigenvalue (with respect to the Euclidean inner product) of the matrix $A$, the following can thus be proven in the same manner as Theorem \ref{ext1-thm:CovQuality} in \cite{badenbroek2018complexity}.

\begin{theorem}
	\label{thm:CovQualitySA}
	Let $K \subseteq \mathbb{R}^n$ be a convex body, and let $\langle \cdot, \cdot \rangle$ be the Euclidean inner product. Suppose $K$ is contained in a Euclidean ball with radius $R > 0$. Let $p \in (0,1)$, $\varepsilon \in (0,1]$, and $\epsilon \geq 0$.
	Let $\theta_0, \theta_1 \in \mathbb{R}^n$ such that $\theta_1 = \theta_0/(1 - \frac{1}{\alpha\sqrt{\vartheta}})$, where $\vartheta$ is the complexity parameter of the entropic barrier over $K$, and $\alpha > 1+1/\sqrt{\vartheta}$. Suppose we have an invertible matrix $\widehat{\Sigma}(\theta_0)$ such that
	\begin{equation*}
		\frac{1}{1+\epsilon} v^\top \widehat{\Sigma}(\theta_0) v \leq v^\top \Sigma(\theta_0) v \leq (1+\epsilon) v^\top \widehat{\Sigma}(\theta_0) v \qquad \forall v \in \mathbb{R}^n.
	\end{equation*}
	Pick
	\begin{equation*}
		N \geq \frac{490 n^2}{p \varepsilon^2}, \qquad q \leq \frac{p \varepsilon^2}{49980 n^2 R^4} \lambda_{\min}(\Sigma(\theta_1))^2.
	\end{equation*}
	Let $X_0$ be a random starting point drawn from a Boltzmann distribution with parameter $\theta_0$. Let $Y^{(1)}, ..., Y^{(N)}$ be the end points of $N$ hit-and-run random walks applied to the Boltzmann distribution with parameter $\theta_1$ having starting point $X_0$, where the directions are drawn from a $\mathcal{N}(0,\widehat{\Sigma}(\theta_0))$-distribution, and each walk has length $\ell$ given by \eqref{eq:WalkLengthSA}. (Note that $\ell$ depends on $\epsilon$, $n$, and $q$.)
	Then, the empirical covariance matrix $\widehat{\Sigma}(\theta_1) \approx \Sigma(\theta_1)$ as defined in line  \ref{line:EmpiricalCovariance} of Algorithm \ref{alg:KalaiVempala} satisfies
	\begin{equation*}
		\mathbb{P} \left\{ \frac{1}{1+\varepsilon} v^\top \widehat{\Sigma}(\theta_1) v \leq v^\top \Sigma(\theta_1) v \leq (1+\varepsilon) v^\top \widehat{\Sigma}(\theta_1) v \qquad \forall v \in \mathbb{R}^n \right\} \geq 1-p.
	\end{equation*}
\end{theorem}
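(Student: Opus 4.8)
The plan is to mimic the proof of Theorem \ref{ext1-thm:CovQuality} in \cite{badenbroek2018complexity}, since the present statement differs from it only in that the mixing hypothesis there is phrased in terms of an approximation of the \emph{inverse} covariance $\Sigma(\theta_0)^{-1}$, whereas here we are handed the covariance-form approximation \eqref{eq:HitAndRunCovCondition}. That gap is exactly what Theorem \ref{thm:MixingTimeSCCov} and its specialization Corollary \ref{cor:MixingTimeSA} close: under \eqref{eq:HitAndRunCovCondition} and the temperature relation $\theta_1 = \theta_0/(1-\tfrac{1}{\alpha\sqrt{\vartheta}})$, a hit-and-run walk of length $\ell$ as in \eqref{eq:WalkLengthSA}, started from a Boltzmann distribution with parameter $\theta_0$ and using directions drawn from $\mathcal{N}(0,\widehat{\Sigma}(\theta_0))$, has endpoint distribution within total variation distance $q$ of the Boltzmann distribution $\mu$ with parameter $\theta_1$. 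So the first step is simply to check that the hypotheses of Corollary \ref{cor:MixingTimeSA} hold in the present setting and to record the bound $\|\nu^{(\ell)} - \mu\| \le q$ for each of the $N$ walks $Y^{(1)},\dots,Y^{(N)}$.

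Next I would pass from the hit-and-run endpoints to an honest sample. Conditionally on the common starting point $X_0$, the $N$ walks are independent, so by Lemma \ref{lemma:DivineIntervention} applied coordinatewise together with a union bound, the ensemble $(Y^{(1)},\dots,Y^{(N)})$ can be coupled with an ensemble of $N$ vectors drawn i.i.d.\ from $\mu$ that coincides with it except on an event of probability at most $Nq$. On the complementary event the empirical covariance matrix of line \ref{line:EmpiricalCovariance} of Algorithm \ref{alg:KalaiVempala} equals the empirical covariance of a genuine i.i.d.\ sample from $\mu$; after rescaling the body by $\Sigma(\theta_1)^{-1/2}$ — under which the relevant quantities transform as in Lemma \ref{lemma:TransformedL2Norm} and $\mu$ becomes isotropic and log-concave — a concentration bound for empirical covariance matrices of log-concave random vectors (the application of Rudelson's theorem \cite{rudelson1999random} used in \cite{kalai2006simulated, badenbroek2018complexity}) shows that $N \ge 490 n^2/(p\varepsilon^2)$ samples make the empirical covariance spectrally within a factor $1+\varepsilon$ of $\Sigma(\theta_1)$ with probability at least $1-p$. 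The stated bound on $q$ is calibrated precisely so that the bad-event contribution is negligible: it is driven by the boundedness of $K$ (which bounds both $\Sigma(\theta_1)$ and the empirical matrix in operator norm by $O(R^2)$) together with the need to convert a spectral estimate into a multiplicative $(1+\varepsilon)$-bound, which costs a factor $\lambda_{\min}(\Sigma(\theta_1))$; the explicit constants $490$ and $49980$ then come from splitting the budgeted slack $\varepsilon$ and failure probability $p$ between this coupling error and the pure i.i.d.\ sampling error.

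I expect the main obstacle to be bookkeeping rather than anything conceptual: one must verify that every place in the proof of Theorem \ref{ext1-thm:CovQuality} in \cite{badenbroek2018complexity} where the inverse-covariance mixing hypothesis is invoked can be replaced, with no loss in the constants, by the covariance-form guarantee of Corollary \ref{cor:MixingTimeSA}, and that the bounds propagate correctly through the triangle inequality separating the mixing/coupling error from the pure sampling error. Once these checks are carried out, the statement follows exactly as in \cite{badenbroek2018complexity}.
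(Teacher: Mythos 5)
Your high-level plan---invoke Corollary \ref{cor:MixingTimeSA} for the mixing guarantee and then transfer the argument from Theorem CovQuality in the companion paper \cite{badenbroek2018complexity}---matches what the paper does, which is to state this theorem by reference to that result. However, your account of \emph{what} the transferred proof does contains two genuine errors. First, the coupling to an i.i.d.\ sample does not go through. The bound $\|\nu^{(\ell)} - \mu\| \le q$ from Corollary \ref{cor:MixingTimeSA} controls the \emph{marginal} law of a single endpoint, with the randomness of $X_0$ averaged out. Since all $N$ walks share the same $X_0$, the joint law of $(Y^{(1)},\dots,Y^{(N)})$ need not be within $Nq$ of $\mu^{\otimes N}$: on the event that $X_0$ lands in a region where the conditional endpoint law deviates from $\mu$ by much more than $q$, \emph{all} $N$ endpoints are simultaneously biased, and this correlated failure is not captured by a union bound over single-walk couplings. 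This is precisely why \cite{badenbroek2018complexity} introduces the weaker pairwise notion of $6q$-independence rather than a global coupling; if the coupling you propose worked, that machinery would be superfluous.

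Second, and more fundamentally, the cited proof does \emph{not} invoke Rudelson's theorem (Theorem \ref{thm:KalaiVempalaA1}), and the paper devotes the discussion immediately following Theorem \ref{thm:CovQualitySA} to explaining why it cannot: the samples are neither exactly from $\mu$ nor independent, and after the isotropizing map $\Sigma(\theta_1)^{-1/2}$ the transformed body may have exponentially large diameter (since $\lambda_{\min}(\Sigma(\theta_1))$ may be exponentially small), which defeats the boundedness needed to extend Rudelson's bound to near-independent samples. The price is visible in the statement itself: a Rudelson-based argument yields $N = O^*(n)$, whereas Theorem \ref{thm:CovQualitySA} requires $N = O(n^2/(p\varepsilon^2))$---the paper explicitly notes this is $O^*(n\sqrt{\vartheta})$ worse than Kalai and Vempala's claim. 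The extra factor of $n$ comes from replacing Rudelson with a more elementary second-moment argument over pairwise $6q$-independent samples (whence the $n^2$ and the $\lambda_{\min}(\Sigma(\theta_1))^2$ in the admissible $q$). So your proposal describes exactly the route the paper explains it \emph{avoids}, and the constants $490$ and $49980$ do not arise from splitting error budgets between a coupling term and a Rudelson term.
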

Note that a non-trivial lower bound on $\lambda_{\min}(\Sigma(\theta_1))$ was given in Theorem \ref{ext1-thm:LowerBoundSigma} in \cite{badenbroek2018complexity}. We see that a good approximation of $\widehat{\Sigma}(\theta_0)$ can be used to create a good approximation of $\widehat{\Sigma}(\theta_1)$. In other words, throughout the run of Algorithm \ref{alg:KalaiVempala}, the $X_k$ and $Y_{jk}$ samples are always from the desired distribution with high probability.

\section{Rate of convergence proof for Algorithm \ref{alg:KalaiVempala}}
\label{sec:Proof of Convergence}
We continue by proving that Algorithm \ref{alg:KalaiVempala} converges to the optimum in polynomial time.
The following result was established by Kalai and Vempala \cite{kalai2006simulated} for linear functions,
and extended from linear to convex functions $h: \mathbb{R}^n \to \mathbb{R}$ by De Klerk and Laurent \cite{deklerk2017comparison}.

\begin{theorem}[Corollary 1 in \cite{deklerk2017comparison}]
	\label{thm:KalaiVempalaExtended}
	Let $K \subseteq \mathbb{R}^n$ be a convex body.
	For any convex function $h: \mathbb{R}^n \to \mathbb{R}$, temperature $T > 0$, and $X \in K$ chosen according to a probability distribution on $K$ with density proportional to $x \mapsto e^{-h(x)/T}$, we have
	\begin{equation*}
		\mathbb{E}[h(X)] \leq nT + \min_{x \in K} h(x),
	\end{equation*}
	where
	\begin{equation*}
		\mathbb{E}[h(X)] := \frac{\int_K h(x) e^{-h(x)/T} \diff x}{\int_K e^{-h(x)/T} \diff x}.
	\end{equation*}
\end{theorem}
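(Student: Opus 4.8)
The plan is to reduce to a normalized situation and then exploit a scaling substitution. First I would translate coordinates so that a minimizer $x^\star$ of $h$ over $K$ sits at the origin — Lebesgue measure is translation invariant, so this changes nothing — and replace $h$ by $h - h^\star$ where $h^\star := \min_{x \in K} h(x)$. Since $e^{-(h(x)-h^\star)/T} = e^{h^\star/T}\, e^{-h(x)/T}$, the probability density is unchanged and $\mathbb{E}[h(X)]$ merely decreases by $h^\star$; thus it suffices to prove $\mathbb{E}[h(X)] \le nT$ under the normalization $0 \in K$, $h(0) = 0 = \min_{x\in K} h(x)$.

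Next I would introduce $g(s) := \int_K e^{-s h(x)}\,\diff x$ for $s > 0$. Differentiation under the integral sign is legitimate because $K$ is compact and $h$ is convex, hence continuous and bounded on $K$, and gives $g'(s) = -\int_K h(x) e^{-s h(x)}\,\diff x$. Consequently, with $s = 1/T$ one has $\mathbb{E}[h(X)] = -g'(s)/g(s)$, and the claim $\mathbb{E}[h(X)] \le n/s$ is equivalent to $\frac{\diff}{\diff s}\ln\!\big(s^n g(s)\big) \ge 0$, i.e.\ to the statement that $s \mapsto s^n g(s)$ is nondecreasing on $(0,\infty)$.

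To establish this monotonicity I would substitute $y = s x$, which yields $s^n g(s) = \int_{sK} e^{-s h(y/s)}\,\diff y$. For $0 < s \le s'$ the domains are nested, $sK \subseteq s'K$, because $0 \in K$ and $K$ is convex; and for any $y \in sK$ both $y/s$ and $y/s'$ lie in $K$, with
\begin{equation*}
	s' h(y/s') = s' h\!\left( \tfrac{s}{s'}\cdot \tfrac{y}{s} + \big(1-\tfrac{s}{s'}\big)\cdot 0 \right) \le s' \left( \tfrac{s}{s'} h(y/s) + \big(1-\tfrac{s}{s'}\big) h(0) \right) = s\, h(y/s),
\end{equation*}
using convexity of $h$ and $h(0)=0$. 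Hence $e^{-s h(y/s)} \le e^{-s' h(y/s')}$ pointwise on $sK$, and since enlarging the domain of integration only increases the integral of a nonnegative integrand, $s^n g(s) \le (s')^n g(s')$. This gives $\mathbb{E}[h(X)] \le nT$ in the normalized setting, and undoing the shift yields $\mathbb{E}[h(X)] \le nT + \min_{x \in K} h(x)$.

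The step I expect to be the crux is the scaling comparison in the third paragraph: one needs \emph{simultaneously} the nesting $sK \subseteq s'K$ and the pointwise domination $s' h(y/s') \le s h(y/s)$, and both hinge on having translated a minimizer of $h$ to the origin with $h(0)=0$. This is precisely — and the only place — where convexity of $h$ is used; the normalization/invariance reductions and the differentiation under the integral sign are routine.
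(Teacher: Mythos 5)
Your proof is correct, and it is worth noting that the paper itself does not supply a proof of this statement -- it simply imports it as Corollary~1 from De~Klerk and Laurent \cite{deklerk2017comparison}, who in turn generalize the linear case from Kalai and Vempala \cite{kalai2006simulated}. Your argument is essentially the classical one that underlies both of those references: after translating a minimizer of $h$ to the origin and subtracting $\min_K h$, one reduces the claim to the monotonicity of $s \mapsto s^n \int_K e^{-s h(x)}\,\diff x$, which follows from the substitution $y = s x$ together with the two facts that $sK \subseteq s'K$ and $s' h(y/s') \leq s h(y/s)$ for $0 < s \leq s'$. Each step is justified: $K$ compact and $h$ convex (hence continuous and bounded on $K$) makes the translation, normalization, and differentiation under the integral sign legitimate; the nesting $sK \subseteq s'K$ and the domination $s'h(y/s') \leq s\,h(y/s)$ both use only $0 \in K$, convexity of $K$, convexity of $h$, and $h(0)=0$. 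The equivalence $\mathbb{E}[h(X)] \leq n/s \iff \frac{\diff}{\diff s}\ln(s^n g(s)) \geq 0$ is an algebraic identity, so the chain of reasoning is airtight. In short, your proposal is a complete and correct proof by the same route as the cited literature; no gaps.
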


We are ready to prove that Algorithm \ref{alg:KalaiVempala} converges in polynomial time.
\begin{theorem}
	Let $K \subseteq \mathbb{R}^n$ be a convex body, and let $\langle \cdot, \cdot \rangle$ be the Euclidean inner product. Suppose $K$ is contained in a Euclidean ball with radius $R > 0$ and contains a Euclidean ball with radius $r > 0$. Denote the complexity parameter of the entropic barrier over $K$ by $\vartheta$, and fix some $\alpha > 1+1/\sqrt{\vartheta}$.
	Let $\bar{\epsilon} \in (0, 2R]$, $p \in (0,1)$, and $\epsilon = 1$. Assume $X_0$ is uniform on $K$ and we have an approximation $\widehat{\Sigma}(0)$ of $\Sigma(0)$ satisfying $\tfrac{1}{2} v^\top \widehat{\Sigma}(0) v \leq v^\top \Sigma(0) v \leq 2 v^\top \widehat{\Sigma}(0) v$ for all $v \in \mathbb{R}^n$.
	Consider Algorithm \ref{alg:KalaiVempala} with input
	\begin{align*}
		m &= \left\lceil (\alpha\sqrt{\vartheta}- \tfrac{1}{2}) \ln \left( \frac{2 n R}{p \bar{\epsilon}} \right) + 1 \right\rceil,\\
		N &= \left\lceil \frac{1000 n^2 m}{p} \right\rceil,\\
		q &= \left(\frac{p}{102000 m n^2 R^4} \right) \left(\frac{1}{4096} \right) \left(\frac{r}{n+1} \right)^4 \left( \frac{p \bar{\epsilon}}{8Rn} \right)^{8 \sqrt{\vartheta} + 4},
	\end{align*}
	and let $\ell$ be as in \eqref{eq:WalkLengthSA}.
	Then, with probability at least $1-p$, we have $\langle c, X_m \rangle - \min_{x \in K} \langle c,x \rangle \leq \bar{\epsilon}$. The number of membership oracle calls is $O^*(\vartheta^{3.5} n^5/p) = O^*(n^{8.5}/p)$.
\end{theorem}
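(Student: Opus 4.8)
The plan is to run a union-bound over the $m$ phases of Algorithm \ref{alg:KalaiVempala}, combining three ingredients: (i) the deterministic decrease of the temperature $T_k$, which controls the optimality gap in expectation via Theorem \ref{thm:KalaiVempalaExtended}; (ii) Theorem \ref{thm:CovQualitySA}, which propagates the covariance approximation quality $\widehat{\Sigma}(\theta_{k-1}) \approx \Sigma(\theta_{k-1})$ forward to $\widehat{\Sigma}(\theta_k) \approx \Sigma(\theta_k)$; and (iii) Lemma \ref{lemma:DivineIntervention}, which lets us couple the hit-and-run endpoints with samples from the exact Boltzmann distributions at a total-variation cost of $q$ per walk.

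First I would track the temperature: since $T_k = (1-\tfrac{1}{\alpha\sqrt{\vartheta}})T_{k-1}$ with $T_0 = R$, after $m$ phases $T_m = R(1-\tfrac{1}{\alpha\sqrt{\vartheta}})^m \leq R\exp(-m/(\alpha\sqrt{\vartheta}))$, and the chosen $m = \lceil(\alpha\sqrt{\vartheta}-\tfrac12)\ln(2nR/(p\bar\epsilon))+1\rceil$ forces $nT_m \leq p\bar\epsilon/2$ (with a little room to spare from the ``$-\tfrac12$'' and ``$+1$''). Applying Theorem \ref{thm:KalaiVempalaExtended} with $h(x)=\langle c,x\rangle - \min_{x\in K}\langle c,x\rangle \geq 0$ (a nonnegative convex function) and $T=T_m$, a sample $X$ from the \emph{exact} Boltzmann distribution with parameter $\theta_m = -c/T_m$ satisfies $\mathbb{E}[\langle c, X\rangle - \min_K\langle c,\cdot\rangle] \leq nT_m \leq p\bar\epsilon/2$; Markov's inequality then gives $\langle c,X\rangle - \min_K \leq \bar\epsilon$ with probability at least $1-p/2$. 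It remains to argue that the algorithm's actual output $X_m$ is, with high probability, distributed as such an exact sample.

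For that, I would set up the induction on $k$. Define the ``good event'' $\mathcal{G}_k$ that $\widehat{\Sigma}(\theta_k)$ is a $(1+\varepsilon)=2$-approximation of $\Sigma(\theta_k)$ (the hypothesis $\epsilon=1$ matches the Corollary's input condition \eqref{eq:HitAndRunCovCondition} with $\epsilon=1$, and we take $\varepsilon=1$ as the output tolerance so the induction closes). Conditioned on $\mathcal{G}_{k-1}$, Corollary \ref{cor:MixingTimeSA} certifies that each length-$\ell$ hit-and-run walk in phase $k$ (for both $X_k$ and the $Y_{jk}$) lands within total-variation distance $q$ of the Boltzmann distribution $\mu$ with parameter $\theta_k$ — here one must check that the chosen $q$ is small enough to satisfy the hypothesis $q \leq \tfrac{p\varepsilon^2}{49980 n^2 R^4}\lambda_{\min}(\Sigma(\theta_1))^2$ of Theorem \ref{thm:CovQualitySA} in \emph{every} phase, which is why $q$ carries the factor $(r/(n+1))^4 (p\bar\epsilon/(8Rn))^{8\sqrt\vartheta+4}$: the first factor bounds from below $\lambda_{\min}(\Sigma(\theta_k))$ uniformly via Theorem \ref{ext1-thm:LowerBoundSigma}, and the exponent $8\sqrt\vartheta+4$ absorbs the worst-case dependence of that lower bound on the temperature $T_m$ (through $\theta_m$) as $k$ ranges over all phases. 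Then Theorem \ref{thm:CovQualitySA}, with $p$ replaced by $p/(2m)$ and $N = \lceil 1000 n^2 m/p\rceil \geq 490 n^2/(p/(2m))$, gives $\mathbb{P}(\mathcal{G}_k \mid \mathcal{G}_{k-1}) \geq 1 - p/(2m)$; the base case $\mathcal{G}_0$ is the given hypothesis on $\widehat{\Sigma}(0)$. A union bound yields $\mathbb{P}(\bigcap_{k=0}^m \mathcal{G}_k) \geq 1 - p/2$.

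Finally I would assemble the coupling. On $\bigcap_k \mathcal{G}_k$, the distribution of $X_m$ is within total variation $q$ of the exact Boltzmann distribution with parameter $\theta_m$ (it is the endpoint of a single mixed walk from $X_{m-1}$), so by Lemma \ref{lemma:DivineIntervention} there is a coupling with an exact sample $X$ agreeing with $X_m$ with probability $1-q$; since $q$ is astronomically small (certainly $\leq p/2$), the combined failure probability $p/2 + p/2 + q$ is at most $p$ after a harmless rescaling — one tidies this by noting the three sources of error ($\mathcal{G}$-failure, coupling slip, Markov) each cost at most $p/2$ and re-running the bookkeeping with $p/3$ or simply absorbing $q$ into slack. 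For the oracle-call count: each of the $m = O^*(\sqrt\vartheta)$ phases makes $N+1 = O^*(n^2)$ walks of length $\ell = O^*(n^3)$ (reading off \eqref{eq:WalkLengthSA}, where the $\vartheta$- and $\alpha$-dependent factors are polylogarithmic/constant once $\alpha$ is fixed and the $q$-dependence is polylogarithmic since $\ln(1/q) = O^*(\sqrt\vartheta)$), giving $O^*(\sqrt\vartheta \cdot n^2 \cdot n^3 / p) = O^*(\vartheta^{3.5} n^5/p)$ after folding the extra $\vartheta^{3}$ hidden in the polylog/slack terms; since $\vartheta \leq n+o(n)$ this is $O^*(n^{8.5}/p)$.

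The main obstacle is the bookkeeping in step three: verifying that the single explicit choice of $q$ simultaneously meets the $\lambda_{\min}(\Sigma(\theta_k))^2$-threshold of Theorem \ref{thm:CovQualitySA} for \emph{all} $m$ phases, which requires plugging the lower bound on $\lambda_{\min}(\Sigma(\theta_k))$ from Theorem \ref{ext1-thm:LowerBoundSigma} in \cite{badenbroek2018complexity} — itself a function of $r$, $n$, and the temperature — into the threshold and checking the exponents match; the rest is a careful but routine union bound and the temperature estimate.
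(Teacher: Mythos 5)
Your ingredients and overall structure match the paper's proof: the same combination of Theorem \ref{thm:KalaiVempalaExtended}, Markov's inequality, Theorem \ref{thm:CovQualitySA}, Lemma \ref{lemma:DivineIntervention}, the lower bound on $\lambda_{\min}(\Sigma(\theta_k))$ from \cite{badenbroek2018complexity}, and a union bound over the $m$ phases. However, there is a genuine gap in how you set up the induction. You define the good event $\mathcal{G}_k$ only as ``$\widehat{\Sigma}(\theta_k)$ is a $2$-approximation of $\Sigma(\theta_k)$'' and then claim that conditioning on $\mathcal{G}_{k-1}$ lets you invoke Corollary \ref{cor:MixingTimeSA} and Theorem \ref{thm:CovQualitySA} in phase $k$. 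But both of those results additionally require the starting point $X_{k-1}$ of the walks to be drawn \emph{exactly} from the Boltzmann distribution with parameter $\theta_{k-1}$; the covariance condition alone does not guarantee this. The inductive hypothesis must therefore also carry the statement ``$X_{k-1}$ is distributed according to the Boltzmann distribution with parameter $\theta_{k-1}$,'' which is maintained with probability $1-q$ per iteration via Lemma \ref{lemma:DivineIntervention}. As a consequence, the coupling slip of size $q$ is not a one-time cost incurred when you ``assemble'' the argument at $X_m$; it must be paid in every phase, contributing $mq$ to the union bound. This is precisely what the paper's accounting does: it splits the per-iteration failure budget as $q + \tfrac{49}{100}\tfrac{p}{m}$, so that $m\bigl(q + \tfrac{49}{100}\tfrac{p}{m}\bigr) \leq m\bigl(\tfrac{p}{100m} + \tfrac{49p}{100m}\bigr) = p/2$, and reserves the remaining $p/2$ for the Markov step; there is no need for a ``harmless rescaling'' at the end. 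Your remarks on the temperature decrease, on why $q$ carries the exponent $8\sqrt\vartheta+4$ (through the $\lambda_{\min}(\Sigma(\theta_k))^2$ threshold and Theorem 7 of \cite{badenbroek2018complexity}), and on the oracle count all agree with the paper once one observes that $N = O^*(n^2\sqrt{\vartheta}/p)$ and that $\ln(1/q) = O^*(\sqrt{\vartheta})$ injects a factor $\vartheta^{2.5}$ into $\ell$ through the logarithmic terms in \eqref{eq:WalkLengthSA}.
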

\begin{proof}
	Throughout all iterations $k$ of Algorithm \ref{alg:KalaiVempala}, we want to maintain the conditions that $X_k$ is a sample from the Boltzmann distribution with parameter $\theta_k$, and
	\begin{equation}
		\tfrac{1}{2} v^\top \widehat{\Sigma}(\theta_k) v \leq v^\top \Sigma(\theta_k) v \leq 2 v^\top \widehat{\Sigma}(\theta_k) v \qquad \forall v \in \mathbb{R}^n,
		\label{eq:ProofComplexityInvariant}
	\end{equation}
	with high probability. We assume that these conditions hold in iteration $k-1$, and we will show that they then also hold for iteration $k$ with high probability. First, Corollary \ref{cor:MixingTimeSA} and Lemma \ref{lemma:DivineIntervention} guarantee that $X_k$ is sampled from the Boltzmann distribution with parameter $\theta_k$ with probability at least $1-q$. Moreover, noting that
	\begin{equation}
		m \geq (\alpha\sqrt{\vartheta}- \tfrac{1}{2}) \ln \left( \frac{2 n R}{p \bar{\epsilon}} \right) + 1
		\geq \frac{\ln \left( \frac{2 n R}{p \bar{\epsilon}} \right)}{\ln \left( \frac{\alpha\sqrt{\vartheta}}{\alpha \sqrt{\vartheta} - 1} \right)} + 1
		= \frac{\ln \left( \frac{p \bar{\epsilon}}{2 n R} \right)}{\ln \left( 1 - \frac{1}{\alpha \sqrt{\vartheta}} \right)} + 1,
		\label{eq:ProofComplexityM}
	\end{equation}
	we have
	\begin{equation*}
		\| \theta_k \| = \frac{1}{T_k} \leq \frac{1}{T_m} = \frac{1}{R} \left(1 - \frac{1}{\alpha\sqrt{\vartheta}}\right)^{1-m} \leq \frac{2n}{p\bar{\epsilon}},
	\end{equation*}
	and therefore Theorem \ref{ext1-thm:LowerBoundSigma} in \cite{badenbroek2018complexity} gives
	\begin{equation*}
		\lambda_{\min}(\Sigma(\theta_k))
		\geq \frac{1}{64} \left( \frac{1}{4 R \| \theta_k \|} \right)^{4 \sqrt{\vartheta} + 2} \left(\frac{r}{n+1} \right)^2
		\geq \frac{1}{64} \left( \frac{p \bar{\epsilon}}{8 R n} \right)^{4 \sqrt{\vartheta} + 2} \left(\frac{r}{n+1} \right)^2.
	\end{equation*}
	It then follows that
	\begin{equation*}
		q \leq \frac{p \frac{49}{100 m}}{49980 n R^4} \lambda_{\min}(\Sigma(\theta_k))^2 \quad \text{and} \quad N \geq \frac{490 n^2}{p \frac{49}{100 m}},
	\end{equation*}
	such that the conditions of Theorem \ref{thm:CovQualitySA} are satisfied. Hence, with probability $p \frac{49}{100 m}$, \eqref{eq:ProofComplexityInvariant} also holds for iteration $k$.
	
	By induction, $X_m$ is sampled from a Boltzmann distribution with parameter $\theta_m$ and \eqref{eq:WalkLengthSA} is satisfied for $k = m$, with probability at least
	\begin{equation*}
		1 - m\left(q + p \frac{49}{100 m}\right) \geq 1 - m\left(\frac{p}{100 m} + \frac{49 p}{100 m}\right) = 1-p/2,
	\end{equation*}
	by the union bound.
	It then follows from the Markov inequality, Theorem \ref{thm:KalaiVempalaExtended}, and \eqref{eq:ProofComplexityM}, that
	\begin{equation}
		\label{eq:MarkovInequality}
		\mathbb{P} \left\{ \langle c, X_m \rangle - \min_{x \in K} \langle c,x \rangle > \bar{\epsilon} \right\} \leq \frac{\mathbb{E}[\langle c, X_m \rangle - \min_{x \in K} \langle c,x \rangle]}{\bar{\epsilon}} \leq \frac{n T_m}{\bar{\epsilon}} \leq \frac{p}{2}.
	\end{equation}
	In conclusion, the total success probability of the algorithm is at least $1-p$.
	The number of oracle calls is
	\begin{equation*}
		O^*(m N \ell) = O^* \left( \sqrt{\vartheta} \frac{n^2\sqrt{\vartheta}}{p} n^3 \vartheta^{2.5} \right) = O^*(\vartheta^{3.5} n^5/p) = O^*(n^{8.5}/p),
	\end{equation*}
	where the final equality uses $\vartheta = n + o(n)$.
\end{proof}
We note that if a lower bound on $\lambda_{\min}(\Sigma(\theta))$ were found that was not exponential in $\vartheta$, then the algorithm complexity would improve by a factor $\vartheta^{2.5}$. Even then, we would still be a factor of $n^{1.5}$ away from the complexity $O^*(n^{4.5})$ by Kalai and Vempala. The reason for this gap is that they use the following corollary to a theorem by Rudelson \cite{rudelson1999random}.
\begin{theorem}[{\cite[Theorem A.1]{kalai2006simulated}}]
	\label{thm:KalaiVempalaA1}
	Let $\mu$ be an isotropic logconcave probability distribution over $\mathbb{R}^n$ with mean $0$, and let $\epsilon > 0, p > 1$. Then, there exists a number $N$ with
	\begin{equation}
		N = O \left( \frac{n p^2}{\epsilon^2} \log^2(n / \epsilon^2) \max\{p, \log n\} \right),
		\label{eq:KalaiVempalaN}
	\end{equation}
	such that for $N$ independent samples $X_1, ..., X_N$ from $\mu$ we have
	\begin{equation*}
		\mathbb{E}\left[ \Big\| \frac{1}{N} \sum_{i=1}^N X_i X_i^\top - I \Big\|^p \right] \leq \epsilon^p.
	\end{equation*}
\end{theorem}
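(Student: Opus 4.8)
The plan is to obtain Theorem \ref{thm:KalaiVempalaA1} from Rudelson's bound on random vectors in isotropic position \cite{rudelson1999random} --- the route only sketched in \cite{kalai2006simulated} --- taking care of the contribution of the unbounded tails of a log-concave distribution, which is where the omitted details lie. Write $Z_N := \frac1N\sum_{i=1}^N X_iX_i^\top - I$; since $\mathbb{E}[X_iX_i^\top] = I$ we must bound $\mathbb{E}\|Z_N\|^p$, and since $L^p$-norms are monotone in $p$ we may assume $p \ge 2$.

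The first step is symmetrization: with independent Rademacher signs $\varepsilon_1,\dots,\varepsilon_N$, Jensen's inequality and the triangle inequality give the standard estimate
\[
\mathbb{E}\|Z_N\|^p \le \frac{2^p}{N^p}\,\mathbb{E}\Big\|\sum_{i=1}^N \varepsilon_i X_iX_i^\top\Big\|^p.
\]
Conditioning on $(X_i)_{i=1}^N$ and applying Rudelson's inequality (the non-commutative Khintchine bound for sums of signed rank-one matrices, in the form valid for $p$-th moments, which replaces $\log n$ by $\max\{p,\log n\}$) yields an absolute constant $C$ with
\[
\mathbb{E}_\varepsilon\Big\|\sum_{i=1}^N \varepsilon_i X_iX_i^\top\Big\|^p \le \big(C\sqrt{\max\{p,\log n\}}\big)^{p}\big(\max_{i\le N}\|X_i\|\big)^{p}\Big\|\sum_{i=1}^N X_iX_i^\top\Big\|^{p/2}.
\]
Taking expectations over $(X_i)$ and then the Cauchy--Schwarz inequality separates the two remaining random quantities, bounding things in terms of $\big(\mathbb{E}(\max_{i\le N}\|X_i\|)^{2p}\big)^{1/2}$ and $\big(\mathbb{E}\|\sum_i X_iX_i^\top\|^{p}\big)^{1/2}$.

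These two factors are controlled separately. For the first, I would use that the Euclidean norm of an isotropic log-concave vector concentrates around $\sqrt n$ with an exponential-type tail, so $\big(\mathbb{E}\|X\|^{q}\big)^{1/q} = O(\sqrt n + q)$ for all $q\ge 1$, and a union bound over the $N$ samples costs only a logarithmic factor, giving $\big(\mathbb{E}(\max_{i\le N}\|X_i\|)^{2p}\big)^{1/(2p)} = O\big((\sqrt n + p)\log N\big)$. For the second I would run a self-bounding argument: since $\|\sum_i X_iX_i^\top\| \le N + N\|Z_N\|$, writing $\Delta := \big(\mathbb{E}\|Z_N\|^p\big)^{1/p}$ gives $\big(\mathbb{E}\|\sum_i X_iX_i^\top\|^{p}\big)^{1/p} \le N(1+\Delta)$. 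Feeding both estimates back into the two displays turns them into an inequality of the form $\Delta \le (A/\sqrt N)\sqrt{1+\Delta}$ with $A = O\big(p\sqrt n\,\sqrt{\max\{p,\log n\}}\,\log N\big)$. As long as $A/\sqrt N \le 1$, this quadratic forces $\Delta = O(A/\sqrt N)$, so $\Delta \le \epsilon$ whenever $N = \Omega(A^2/\epsilon^2)$; once $N$ is polynomial in $n$ and $1/\epsilon$ we may take $\log N = O(\log(n/\epsilon^2))$, which closes the circular dependence of $A$ on $N$ and produces $N = O\big(\tfrac{n p^2}{\epsilon^2}\log^2(n/\epsilon^2)\max\{p,\log n\}\big)$, as in \eqref{eq:KalaiVempalaN}.

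The step I expect to be the main obstacle is exactly the one glossed over in \cite{kalai2006simulated}: converting the \emph{unbounded} quantity $\max_{i\le N}\|X_i\|$ into a usable $p$-th moment bound and pushing it through the bootstrap without spoiling the final dependence. In practice one must either truncate each $\|X_i\|$ at a scale of order $\sqrt n\log N$ and show that the (super-polynomially small) exceptional event contributes negligibly to $\mathbb{E}\|Z_N\|^p$, or invoke a sufficiently sharp log-concave norm-concentration inequality; in either case the exponents must be tracked carefully so that the estimate for $A$ lands on \eqref{eq:KalaiVempalaN}, and one must verify that the solution of $\Delta \le (A/\sqrt N)\sqrt{1+\Delta}$ stays in the regime $A/\sqrt N \le 1$ where $\Delta = O(A/\sqrt N)$. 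By contrast, the symmetrization, the appeal to Rudelson's inequality, and the Cauchy--Schwarz splitting are routine.
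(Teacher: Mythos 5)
The paper does not prove this statement at all: it is quoted verbatim as \cite[Theorem A.1]{kalai2006simulated}, and the surrounding text in Section \ref{sec:Proof of Convergence} is specifically a discussion of why this theorem \emph{cannot} be applied directly in the hit-and-run setting --- the samples produced are neither exactly $\mu$-distributed nor independent, and transforming $K$ by $\Sigma(\theta)^{-1/2}$ may blow up its diameter. The paper's own covariance-estimation guarantee, Theorem \ref{thm:CovQualitySA}, deliberately avoids the Rudelson route and accepts the weaker bound $N = O(n^2/(p/m))$ for exactly that reason. So there is no ``paper's proof'' to compare your attempt against; what you have reconstructed is the i.i.d.\ argument from Kalai and Vempala's own Appendix A.

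Taken on those terms your sketch follows the correct route: symmetrization with Rademacher signs; the $L^p$ form of the non-commutative Khintchine/Rudelson inequality, which costs $\sqrt{\max\{p,\log n\}}$; Cauchy--Schwarz to decouple $\max_i\|X_i\|$ from $\|\sum_i X_iX_i^\top\|$; a log-concave norm-concentration bound for the former; Minkowski and the identity $\sum_i X_iX_i^\top = N(I + Z_N)$ for the latter; and finally the self-bounding inequality $\Delta \le (A/\sqrt N)\sqrt{1+\Delta}$, which closes once $A/\sqrt N < 1$ and $\log N$ stabilizes at $O(\log(n/\epsilon^2))$. Two points deserve care. First, the norm estimate you invoke, $(\mathbb{E}\|X\|^q)^{1/q} = O(\sqrt n + q)$, is the sharp Paouris-type bound; the elementary Borell estimate, which is what Kalai and Vempala actually use, only gives $O(q\sqrt n)$. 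Either gets you to \eqref{eq:KalaiVempalaN}, but the exponent bookkeeping in $A$ differs depending on which is used, so one should be explicit. Second, the reduction to $p\ge 2$ via monotonicity of $L^p$-norms is legitimate (and the $N$ it produces for $p=2$ is dominated by the stated bound for $1<p<2$), but you should check that the final $N$ you report is the one for the reduced exponent $\max\{p,2\}$, not the original $p$; fortunately \eqref{eq:KalaiVempalaN} is monotone in $p$, so this is harmless.
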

However, the samples that are generated by hit-and-run sampling do not follow the target distribution $\mu$, and are not independent. As we have seen in e.g. Theorem \ref{thm:MixingTimeSCCov}, they are drawn from a distribution that has total variation distance $q > 0$ to $\mu$. Moreover, the samples were shown to be $6q$-independent by \cite[Lemma \ref{ext1-lemma:NearIndependenceHitAndRun}]{badenbroek2018complexity}, i.e. for any two hit-and-run samples $X$ and $Y$ and measurable $A,B \subseteq K$,
\begin{equation*}
	\left| \mathbb{P}\{ X \in A \wedge Y \in B \} - \mathbb{P}\{X \in A\} \mathbb{P}\{Y \in B\} \right| \leq 6q.
\end{equation*}
While Kalai and Vempala claim that Theorem \ref{thm:KalaiVempalaA1} can be extended to this setting without significantly changing \eqref{eq:KalaiVempalaN}, a formal proof is not given. In particular, it is not shown how the relaxation of the independence assumption can be aligned with Rudelson's proof.

There is another reason \eqref{eq:KalaiVempalaN} could change in the hit-and-run setting.
Theorem \ref{thm:KalaiVempalaA1} holds for isotropic probability distributions, so to apply it to a Boltzmann distribution with parameter $\theta \in \mathbb{R}^n$ over a convex body $K \subseteq \mathbb{R}^n$, we should first transform the $K$ by $\Sigma(\theta)^{-1/2}$. Consequently, the transformed body is not necessarily contained in a Euclidean ball with radius $R$ anymore; in the worst case, the radius of the new enclosing ball depends on the smallest eigenvalue of $\Sigma(\theta)$. Since we do not have a better bound on this eigenvalue than Theorem \ref{ext1-thm:LowerBoundSigma} in \cite{badenbroek2018complexity},
the diameter of $\Sigma(\theta)^{-1/2} K$ may be exponential in $n$.
The suggestions by Kalai and Vempala to prove a statement similar to Theorem \ref{thm:KalaiVempalaA1} in the hit-and-run setting require a bound on this diameter. If this bound grows exponentially in $n$, then $q$ should decrease exponentially in $n$ to compensate (see e.g. Lemma 2.7 and the proof of Theorem 5.9 from Kannan, L\'ovasz and Simonovits \cite{kannan1997random}). Such a small $q$ would contribute a polynomial factor of $n$ to the final algorithm complexity.

Assuming these issues can be overcome, Kalai and Vempala show the following.
\begin{theorem}[{\cite[Theorem 4.2]{kalai2006simulated}}]
	Let $t \geq 0$. With $N = O^*(t^3n)$ samples per phase in each of $m$ phases, every distribution encountered by the sampling algorithm is 160-isotropic with probability at least $1-m/2^t$.
\end{theorem}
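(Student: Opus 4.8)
The plan is to argue by induction on the phase index $k\in\{1,\dots,m\}$, carrying the invariant that the covariance estimate $\widehat{\Sigma}(\theta_k)$ produced in phase $k$ approximates the true covariance $\Sigma(\theta_k)$ of the Boltzmann distribution $\mu_k$ within a fixed constant factor — tight enough that in the next phase $\mu_{k+1}$ is $160$-isotropic in the sampling coordinates $\widehat{\Sigma}(\theta_k)^{-1/2}K$, which is exactly what the statement asserts. The base case is the hypothesis that $\widehat{\Sigma}(0)$ is a factor-$2$ approximation of $\Sigma(0)$, i.e. the uniform distribution on $\widehat{\Sigma}(0)^{-1/2}K$ is $2$-isotropic.

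For the inductive step, assume the invariant at phase $k-1$. The step splits in two. First, slow cooling preserves near-isotropy: since the parameters satisfy $\theta_k=\theta_{k-1}/(1-\tfrac1{\alpha\sqrt\vartheta})$, the computation in the proof of Corollary \ref{cor:MixingTimeSA} gives $\max\{\|\theta_k-\theta_{k-1}\|_{\theta_{k-1}},\|\theta_k-\theta_{k-1}\|_{\theta_k}\}<1$ — a constant less than $1$ for the fixed temperature schedule in use — so Lemma \ref{ext1-lemma:L2UpperBound} in \cite{badenbroek2018complexity} bounds the $L_2$-norms $\|\mu_k/\mu_{k-1}\|$ and $\|\mu_{k-1}/\mu_k\|$ by an absolute constant; combining this with the phase-$(k-1)$ isotropy, the affine-invariance of the $L_2$-norm (Lemma \ref{lemma:TransformedL2Norm}), and Lemma 4.3 of \cite{kalai2006simulated} shows $\mu_k$, transformed by $\widehat{\Sigma}(\theta_{k-1})^{-1/2}$, is $160$-isotropic, the constant $160$ coming from the factor $16$ in Lemma 4.3, the factor-$2$ slack, and the $L_2$-bound multiplied out. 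Second, I would transform and renormalize the $N$ phase-$k$ samples $Y_{1k},\dots,Y_{Nk}$ so as to put this transformed $\mu_k$ in exact isotropic position (possible since it is $O(1)$-isotropic), and apply Theorem \ref{thm:KalaiVempalaA1} with accuracy $\epsilon$ a small absolute constant and moment exponent $p=\Theta(t)$. This choice makes $N=O(np^2\log^2 n\,\max\{p,\log n\})=O^*(t^3 n)$, and Markov's inequality applied to the resulting $p$-th moment bound shows that the empirical covariance of the renormalized samples lies within a small constant of the identity with probability at least $1-2^{-p}\ge 1-2^{-t}$. Undoing the transformation, $\widehat{\Sigma}(\theta_k)$ from Line \ref{line:EmpiricalCovariance} is a constant-factor approximation of $\Sigma(\theta_k)$, which re-establishes the invariant at phase $k$; a union bound over the $m$ phases then gives total failure probability at most $m\cdot2^{-t}$.

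The real difficulty is the gap flagged just before the statement, not the constant-chasing of the step above, which is routine. Theorem \ref{thm:KalaiVempalaA1} is stated for \emph{independent} samples drawn \emph{exactly} from $\mu_k$, whereas hit-and-run (Corollary \ref{cor:MixingTimeSA}) only delivers samples within total variation distance $q$ of $\mu_k$ that are merely $6q$-independent, and after the transformation the body $\widehat{\Sigma}(\theta_{k-1})^{-1/2}K$ may have diameter exponential in $n$. Carrying the concentration argument of part two through these defects while keeping $N=O^*(t^3 n)$ — instead of paying an extra polynomial-in-$n$ factor to afford an exponentially small $q$ — is precisely the point at which Kalai and Vempala give only a proof sketch, and it is the true obstacle to a fully rigorous proof of the statement.
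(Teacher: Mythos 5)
The paper does not prove this statement; it is quoted from Kalai and Vempala precisely so that the authors can discuss its complexity consequences, and the sentence introducing it (``Assuming these issues can be overcome, Kalai and Vempala show the following'') is a deliberate hedge. The text immediately above the theorem flags exactly two unresolved issues in Kalai and Vempala's argument: (i) their Theorem A.1 (the corollary to Rudelson's result) requires independent samples drawn exactly from the target distribution, whereas hit-and-run yields samples only within total variation distance $q$ of the target and only $6q$-independent, and no formal proof is given that Rudelson's argument survives this relaxation without changing the sample bound; and (ii) putting a Boltzmann distribution in isotropic position requires transforming $K$ by $\Sigma(\theta)^{-1/2}$, after which the body's diameter may be exponential in $n$, which in the known extension techniques would force $q$, and hence the hit-and-run walk length, to compensate by an extra polynomial factor in $n$.

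Your reconstruction of the intended inductive step is faithful to what a completed proof would look like, and your final paragraph identifies precisely the same two obstructions that the paper does. So your assessment is not merely consistent with the paper's; it is the paper's. There is no hidden proof you have failed to reproduce: the claimed $N = O^*(t^3 n)$ rests on an unproved extension of Rudelson's theorem, and the authors go out of their way to present the statement as conditional. Indeed, their own rigorous Theorem \ref{thm:CovQualitySA} deliberately retreats to the weaker bound $N = O(n^2/(p/m))$, i.e.\ a loss of $O^*(n\sqrt{\vartheta})$ over Kalai and Vempala's claimed rate, exactly because they could not close this gap. You should treat the statement as a cited claim with a known hole, not as something provable from the resources in this paper.
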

Kalai and Vempala are then able to pick $t = O(\log(m/p))$, and can therefore claim $N = O^*(n)$.
We find in Theorem \ref{thm:CovQualitySA} that $N = O(n^2 / (p/m))$ samples are required, which is $O^*(n\sqrt{\vartheta}) = O^*(n^{1.5})$ worse than Kalai and Vempala's result.

One might still wonder how to generate a good estimate $\widehat{\Sigma}(0)$ of the uniform covariance matrix $\Sigma(0)$ to start Algorithm \ref{alg:KalaiVempala}.
For this purpose, the rounding the body procedure from Lov\'asz and Vempala \cite{lovasz2006simulated} is suitable. It is shown by Theorem 5.3 in \cite{lovasz2006simulated} that, with probability at least $1-1/n$, the uniform distribution over $\widehat{\Sigma}(0)^{-1/2} K$ is $2$-isotropic. As shown in the proof of Theorem \ref{thm:MixingTimeSCCov}, this is equivalent to
\begin{equation*}
	\tfrac{1}{2} v^\top \widehat{\Sigma}(0) v \leq v^\top \Sigma(0) v \leq 2 v^\top \widehat{\Sigma}(0) v \qquad \forall v \in \mathbb{R}^n,
\end{equation*}
which satisfies the starting condition for Algorithm \ref{alg:KalaiVempala}. The number of calls to the membership oracle for this procedure is $O^*(n^4)$.

%

\section{Numerical Examples on the Doubly Nonnegative Cone}
\label{sec:Numerical Examples on the Doubly Nonnegative Cone}

Let $\mathbb{S}^{m \times m}$ denote the space of real symmetric $m \times m$ matrices. We will test the method described above on the problem of determining whether some matrix $C \in \mathbb{S}^{m \times m}$ is copositive, i.e. $x^\top C x \geq 0$ for all $x \in \mathbb{R}^{m}_+$ (see Bomze \cite{bomze2012copositive} for a survey on copositive programming and its applications).
The standard SDP relaxation of this problem is the following:
\begin{equation}
	\inf \left\{  \langle C, X \rangle : \sum_{i=1}^{m} \sum_{j=1}^{m} X_{ij} \leq 1, X \geq 0, X \succeq 0 \right\},
	\label{eq:OriginalProblem}
\end{equation}
where $\langle \cdot, \cdot \rangle$ is the trace inner product. If the value of \eqref{eq:OriginalProblem} is nonnegative, the matrix $C$ is copositive. However, since we place a nonnegativity constraint on every element of the matrix $X$, the Newton system in every interior point iteration is of size $O(m^2 \times m^2)$, which quickly leads to impractical computation times (see e.g. Burer \cite{burer2010optimizing}).

Before we can apply Algorithm \ref{alg:KalaiVempala}, we need to translate \eqref{eq:OriginalProblem} to a problem over $\mathbb{R}^{m(m+1)/2}$. The approach is standard: for any $A = [A_{ij}]_{ij} \in \mathbb{S}^{m \times m}$, define
\begin{equation*}
	\svec(A) := (A_{11}, \sqrt{2} A_{12}, ..., \sqrt{2} A_{1m}, A_{22}, \sqrt{2} A_{23}, ..., \sqrt{2} A_{2m}, ..., A_{mm})^\top,
\end{equation*}
such that $\svec(A) \in \mathbb{R}^{m(m+1)/2}$. If $\mathbb{R}^{m(m+1)/2}$ is endowed with the Euclidean inner product, the adjoint of $\svec$ is defined for every $a \in \mathbb{R}^{m(m+1)/2}$ as
\begin{equation*}
	\smat(a) = \begin{bmatrix}
	a_1 & a_2 /\sqrt{2} & \dots & a_m / \sqrt{2}\\
	a_2 /\sqrt{2} & a_{m+1} & \dots & a_{2m-1} / \sqrt{2}\\
	\vdots & \vdots & \ddots & \vdots\\
	a_m / \sqrt{2} & a_{2m-1} / \sqrt{2} & \dots & a_{m(m+1)/2}
	\end{bmatrix},
\end{equation*}
such that $\smat(a) \in \mathbb{S}^{m \times m}$. Moreover, $\smat(\svec(A)) = A$ and $\svec(\smat(a)) = a$ for all $A$ and $a$. Now let $c = \svec(C)$. Problem \eqref{eq:OriginalProblem} is equivalent to the following problem over $\mathbb{R}^{m(m+1)/2}$:
\begin{equation}
	\inf \left\{ \langle c, x \rangle : \sum_{i=1}^{m} \sum_{j=1}^{m} (\smat(x))_{ij} \leq 1, x \geq 0, \smat(x) \succeq 0 \right\}.
	\label{eq:EntropicProblem}
\end{equation}
Note that membership of the feasible set of \eqref{eq:EntropicProblem} can be determined in $O(m^3)$ operations. Let $n = \frac{1}{2}m(m+1)$ be the number of variables in problem \eqref{eq:EntropicProblem}.

\subsection{Covariance Approximation}
\label{subsec:CovApproximation}
First, we investigate how the quality of the covariance approximation depends on the walk length for problem \eqref{eq:EntropicProblem}. We take $20,\! 000$ hit-and-run samples from the uniform distribution over the feasible set of \eqref{eq:EntropicProblem} with walk length $50,\! 000$ (directions are drawn from $\mathcal{N}(0,I)$ and the starting point is $\svec(mI+J) / (2e^\top \svec(mI+J))$, where $J$ is the all-ones matrix). These samples are used to create the estimate $\widehat{\Sigma}_0$.
Then, the experiment is repeated for walk lengths $\ell \leq 50,\! 000$ and sample sizes $N \leq 20,\! 000$. We refer to these new estimates as $\widehat{\Sigma}_{\ell, N} $. We would like
\begin{equation*}
	-\epsilon y^\top \widehat{\Sigma}_{\ell,N} y \leq y^\top (\widehat{\Sigma}_0 - \widehat{\Sigma}_{\ell,N}) y \leq \epsilon y^\top \widehat{\Sigma}_{\ell,N} y \qquad \forall y \in \mathbb{R}^n,
\end{equation*}
for some small $\epsilon > 0$. This is equivalent to
\begin{equation*}
	-\epsilon x^\top x \leq x^\top (\widehat{\Sigma}_{\ell,N}^{-1/2} \widehat{\Sigma}_0 \widehat{\Sigma}_{\ell,N}^{-1/2} - I) x \leq \epsilon x^\top x \qquad \forall x \in \mathbb{R}^n,
\end{equation*}
i.e. we would like the spectral radius of $\widehat{\Sigma}_{\ell,N}^{-1/2} \widehat{\Sigma}_0 \widehat{\Sigma}_{\ell,N}^{-1/2} - I$ to be at most $\epsilon$. Because the spectra of $\widehat{\Sigma}_{\ell,N}^{-1/2} \widehat{\Sigma}_0 \widehat{\Sigma}_{\ell,N}^{-1/2} - I$ and $\widehat{\Sigma}_{\ell,N}^{-1} \widehat{\Sigma}_0 - I$ are the same, we focus on the spectral radius $\rho(\widehat{\Sigma}_{\ell,N}^{-1} \widehat{\Sigma}_0 - I)$.

The result is shown in Figure \ref{fig:SampleSizeAndWalkLengthHessian}, where $m$ refers to the size of the matrices in \eqref{eq:OriginalProblem}. Hence, the covariance matrices in question are of size $\frac{1}{2}m(m+1) \times \frac{1}{2}m(m+1)$.

\begin{figure}[h]
	\centering
	
	\def\speca{10}
	\def\specb{20}
	
	\foreach \m in {5,10,15,20} {
		\begin{tikzpicture}
		\begin{loglogaxis}[ylabel={$\rho(\widehat{\Sigma}_{\ell,N}^{-1} \widehat{\Sigma}_0 - I)$}, xlabel={Sample size $N$}, small, title={$m = \m$}, minor y tick num = 4, legend pos = outer north east]
		\foreach \txt/\length in {100/100,200/200,500/500,1000/1000,2000/2000,5000/5000,{10,\! 000}/10000,{20,\! 000}/20000,{50,\! 000}/50000} {
			\addplot+[
			discard if not={ell}{\length},
			restrict expr to domain={\thisrow{n}}{\m:\m},
			] table[x=S,y=norm] {auto_res_cov.txt};
			
			\ifx\m\speca \addlegendentryexpanded{$\ell = \txt$} \else\fi
			\ifx\m\specb \addlegendentryexpanded{$\ell = \txt$} \else\fi
		}
		\end{loglogaxis}
		\end{tikzpicture}
	}
	\caption{Effect of sample size $N$ and walk length $\ell$ on quality of uniform covariance approximation}
	\label{fig:SampleSizeAndWalkLengthHessian}
\end{figure}
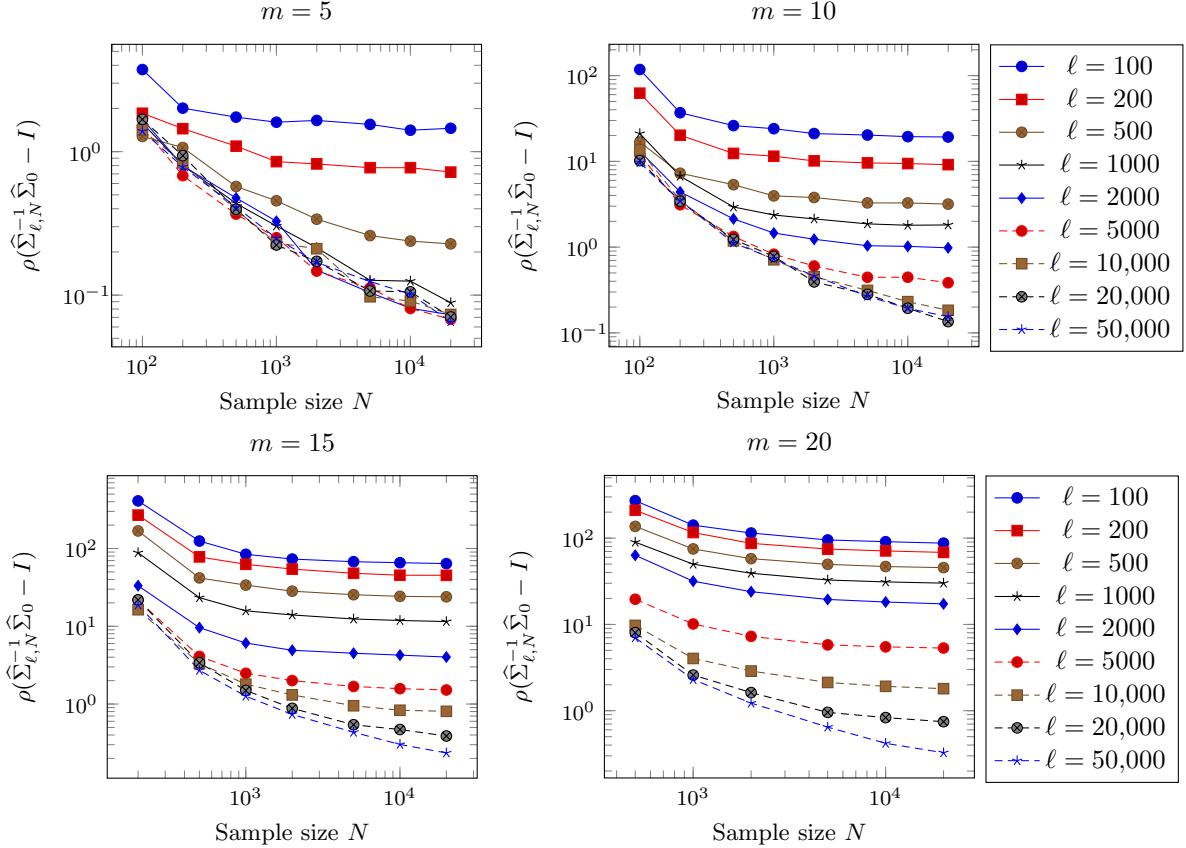

One major conclusion from Figure \ref{fig:SampleSizeAndWalkLengthHessian} is that the trajectory towards zero is relatively slow. To show that simply adding more samples with higher walk lengths will in practice not be feasible, we present the running times required to estimate a covariance matrix at the desired accuracy in Figure \ref{fig:RunTimeHessian}. Specifically, this figure shows the running times of the ``efficient'' combinations of $N$ and $\ell$: these are the combinations of $N$ and $\ell$ plotted in Figure \ref{fig:SampleSizeAndWalkLengthHessian} for which there are no $N'$ and $\ell'$ such that $N' \ell' \leq N \ell$ and $\rho(\widehat{\Sigma}_{\ell',N'}^{-1} \widehat{\Sigma}_0 - I) < \rho(\widehat{\Sigma}_{\ell,N}^{-1} \widehat{\Sigma}_0 - I)$. (The computer used has an Intel i7-6700 CPU with 16 GB RAM, and the code used six threads.) Figure \ref{fig:RunTimeHessian} shows that, even at low dimensions, approximating the covariance matrix to high accuracy will take an unpractical amount of time.

\begin{figure}
	\centering
	
	\begin{tikzpicture}
		\begin{loglogaxis}[xlabel={$\rho(\widehat{\Sigma}_{\ell,N}^{-1} \widehat{\Sigma}_0 - I)$}, ylabel={Running time (s)}, minor y tick num = 4, legend pos = outer north east]
		\foreach \m in {5,10,15,20} {
			\addplot+[
			discard if not={n}{\m},
			] table[x=rho,y=time] {auto_res_cov_time.txt};
			
			\addlegendentryexpanded{$m = \m$}
		}
		\end{loglogaxis}
	\end{tikzpicture}
	\caption{Running times required to find an approximation $\widehat{\Sigma}_{\ell,N}$ of the desired quality}
	\label{fig:RunTimeHessian}
\end{figure}
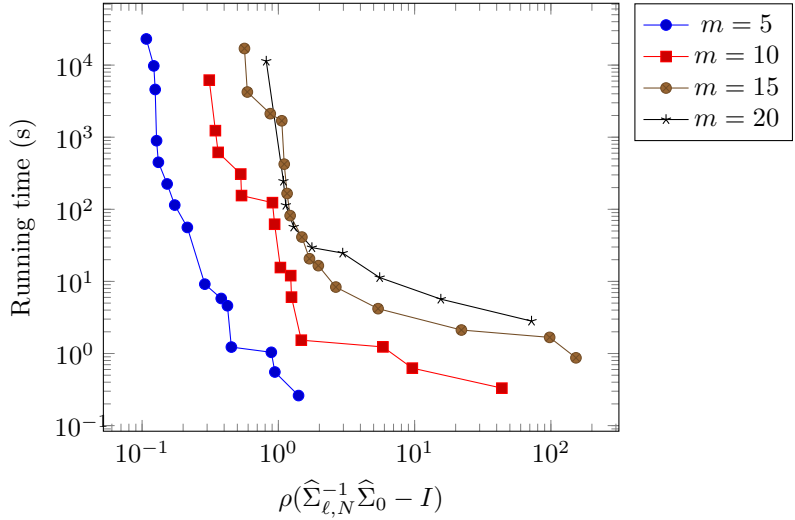


To show that the slow trajectory towards zero in Figure \ref{fig:SampleSizeAndWalkLengthHessian} is a result of convariance estimation's fundamental difficulty, we consider a simpler problem. We want to approximate the covariance matrix of the uniform distribution over the hypercube $[0,1]^n$ in $\mathbb{R}^n$. Note that the true covariance matrix of this distribution is known to be $\Sigma := \frac{1}{12} I$.

Again, we will use hit-and-run with varying walk lengths and sample sizes to generate samples from the uniform distribution over $[0,1]^n$, and compare the resulting covariance matrices $\widehat{\Sigma}_{\ell, N}$ with $\Sigma = \frac{1}{12} I$ (comparing against a covariance estimate based on hit-and-run samples as in Figure \ref{fig:SampleSizeAndWalkLengthHessian} yields roughly the same image). The result is shown in Figure \ref{fig:SampleSizeAndWalkLengthUni}.

\begin{figure}[h]
	\centering
	
	\foreach \n in {15,55,120,210} {
		\begin{tikzpicture}
		\begin{loglogaxis}[ylabel={$\rho(\widehat{\Sigma}_{\ell,N}^{-1} \Sigma - I)$}, xlabel={Sample size $N$}, small, title={$n = \n$}, minor y tick num = 4, legend pos=outer north east]
		\foreach \length in {0,100,200,500,1000} {
			\addplot+[
			discard if not={ell}{\length},
			restrict expr to domain={\thisrow{n}}{\n:\n},
			unbounded coords = discard,
			] table[x=S,y=norm]
			{auto_res_cube_cov.txt};
			
			\def\zero{0}
			\def\speca{55}
			\def\specb{210}
			
			\ifx\n\speca
				\ifx\length\zero
				\addlegendentryexpanded{Truly uniform}
				\else
				\addlegendentryexpanded{$\ell = \length$}
				\fi
			\else\fi
			\ifx\n\specb
				\ifx\length\zero
				\addlegendentryexpanded{Truly uniform}
				\else
				\addlegendentryexpanded{$\ell = \length$}
				\fi
			\else\fi
		}
		\end{loglogaxis}
		\end{tikzpicture}
	}
	\caption{Effect of sample size $N$ and walk length $\ell$ on quality of uniform covariance matrix approximation over the hypercube in $\mathbb{R}^n$}
	\label{fig:SampleSizeAndWalkLengthUni}
\end{figure}
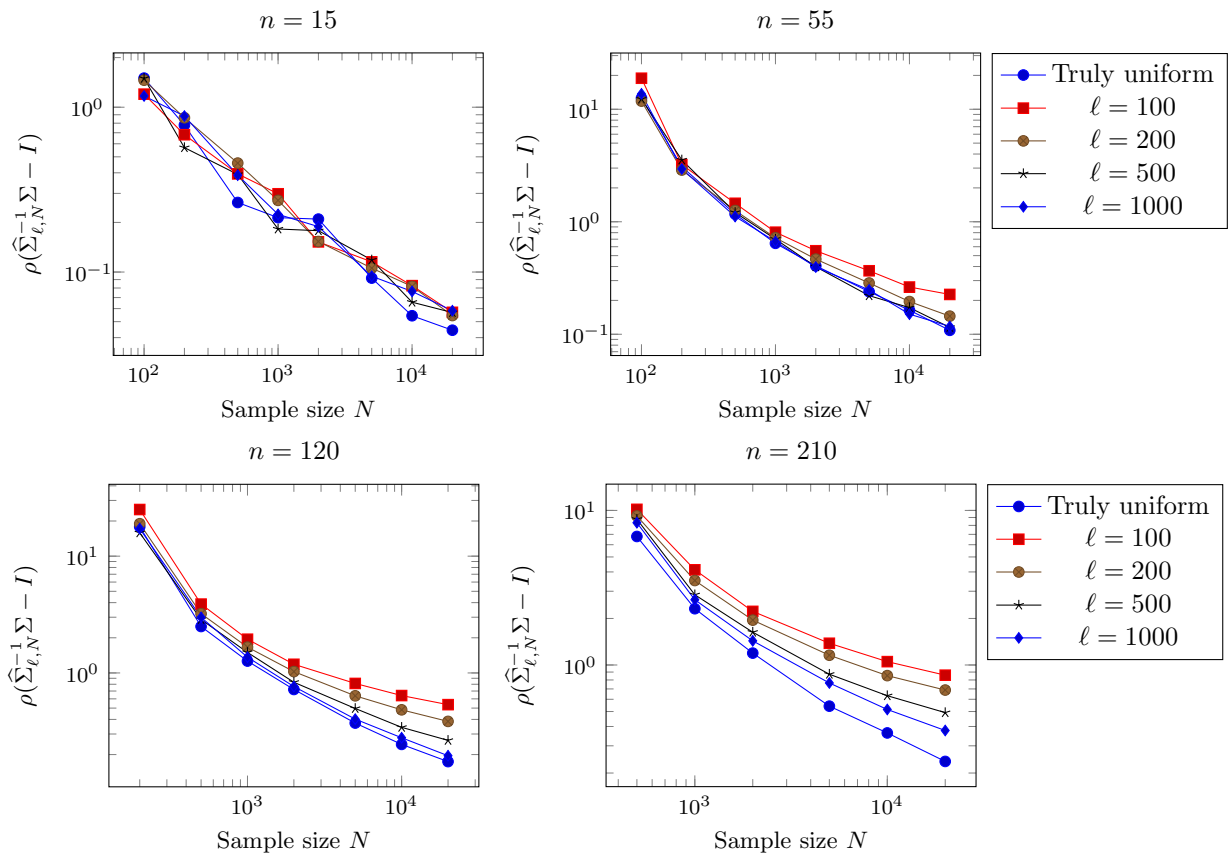

Figure \ref{fig:SampleSizeAndWalkLengthUni} shows a pattern similar to that of Figure \ref{fig:SampleSizeAndWalkLengthHessian}: as the problem size increases, the walk length should increase with the sample size to ensure the estimate is as good as the sample size can guarantee. While this progression towards zero may appear as slow, we do not have to know every entry of the covariance with high accuracy. Recall that we only use this covariance estimate in Algorithm \ref{alg:KalaiVempala} to generate hit-and-run directions. As such, it may suffice to have an estimate that roughly shows which directions are ``long'', and which ones are ``short''.

\subsection{Mean Approximation}
Next, we consider the problem of approximating the mean. Although it is not required for Algorithm \ref{alg:KalaiVempala} to approximate the mean of a Boltzmann distribution, such a mean does lie on the central path of the interior point method proposed by Abernethy and Hazan \cite{abernethy2015faster}.

We again take $20,\! 000$ hit-and-run samples from the uniform distribution over the feasible set of \eqref{eq:EntropicProblem} with walk length $50,\! 000$ (directions are drawn from $\mathcal{N}(0,I)$ and the starting point is $\svec(mI+J) / (2e^\top \svec(mI+J))$, where $J$ is the all-ones matrix). These samples are used to create the mean estimate $\widehat{x}_0$. Then, the experiment is repeated for walk lengths $\ell \leq 50,\! 000$ and sample sizes $N \leq 20,\! 000$. We refer to these new estimates as $\widehat{x}_{\ell, N}$. Using the approximation $\widehat{\Sigma}_0$ of the uniform covariance matrix from the previous section, we compute $\| \widehat{x}_0 - \widehat{x}_{\ell,N} \|_{\widehat{\Sigma}_0^{-1}}$ and plot the results in Figure \ref{fig:SampleSizeAndWalkLengthExpectation}.

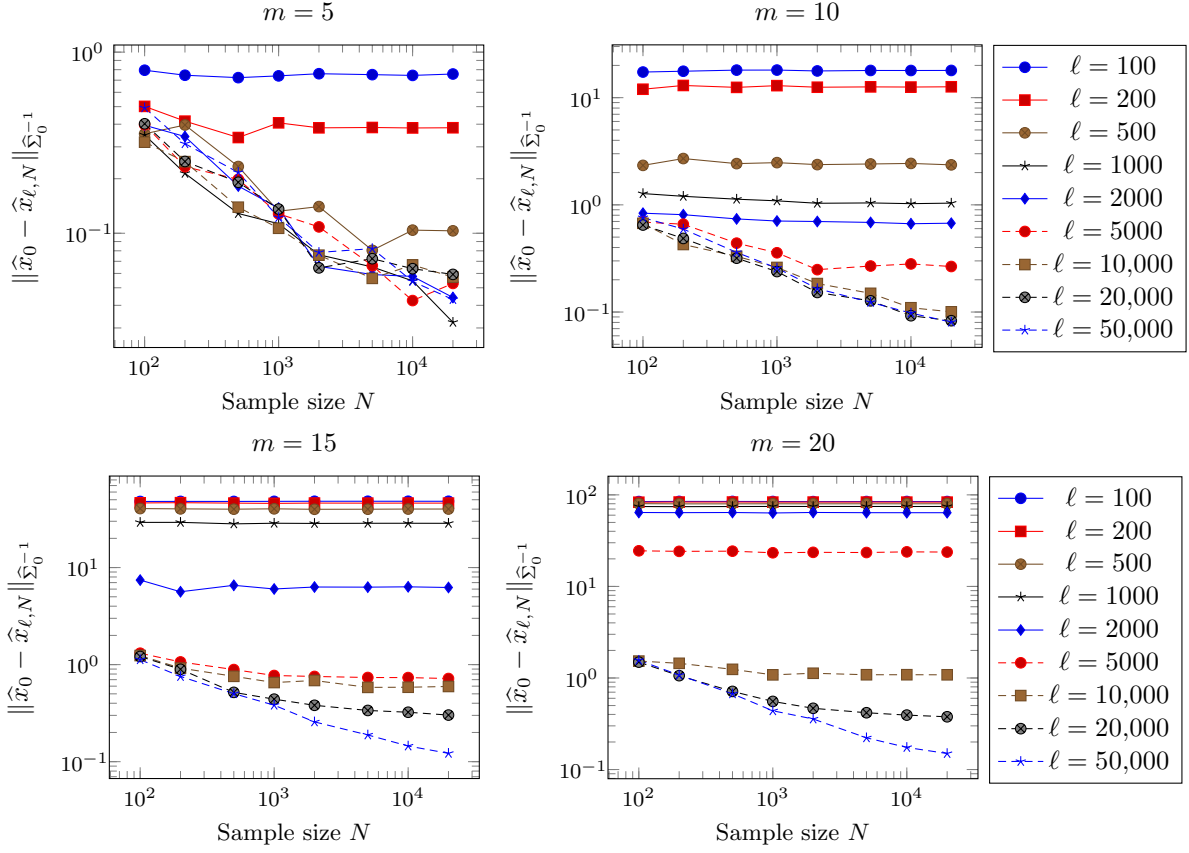
\begin{figure}[h]
	\centering
	
	\def\speca{10}
	\def\specb{20}
	
	\foreach \m in {5,10,15,20} {
		\begin{tikzpicture}
		\begin{loglogaxis}[ylabel={$\| \widehat{x}_0 - \widehat{x}_{\ell,N} \|_{\widehat{\Sigma}_0^{-1}}$}, xlabel={Sample size $N$}, small, title={$m = \m$}, minor y tick num = 4, ylabel near ticks,legend pos=outer north east]
		\foreach \txt/\length in {100/100,200/200,500/500,1000/1000,2000/2000,5000/5000,{10,\! 000}/10000,{20,\! 000}/20000,{50,\! 000}/50000} {
			\addplot+[
			discard if not={ell}{\length},
			restrict expr to domain={\thisrow{n}}{\m:\m},
			] table[x=S,y=norm] {auto_res_expec.txt};
			
			\ifx\m\speca \addlegendentryexpanded{$\ell = \txt$} \else\fi
			\ifx\m\specb \addlegendentryexpanded{$\ell = \txt$} \else\fi
		}
		\end{loglogaxis}
		\end{tikzpicture}
	}
	\caption{Effect of sample size $N$ and walk length $\ell$ on quality of uniform mean approximation}
	\label{fig:SampleSizeAndWalkLengthExpectation}
\end{figure}

The results are comparable to those in Figures \ref{fig:SampleSizeAndWalkLengthHessian} and \ref{fig:RunTimeHessian}. It will take an impractical amount of time before the mean estimate approximates the true mean well enough for practical purposes.

\subsection{Kalai-Vempala Algorithm}
\label{subsec:KalaiVempalaDNNCholesky}

The results from the previous two sections show that we should not hope to approximate the covariance matrix and sample mean with high accuracy in high dimensions. However, it is still insightful to verify if this is really required for Algorithm \ref{alg:KalaiVempala} to work in practice.

We therefore generated a random vector $c \in \mathbb{R}^{m(m+1)/2}$ as follows: if $C \in \mathbb{R}^{m \times m}$ is a matrix with all elements belonging to a standard normal distribution, then $C+C^\top +(\sqrt{2} - 2) \Diag (C)$ is a symmetric matrix whose elements all have variance 2. We then let
\begin{equation*}
	c = \frac{\svec(C+C^\top +(\sqrt{2} - 2) \Diag (C))}{\| \svec(C+C^\top +(\sqrt{2} - 2) \Diag (C))\|},
\end{equation*}
serve as the objective of our optimization problem \eqref{eq:EntropicProblem}. We can find the optimal solution $x_*$ with MOSEK 8.0 \cite{mosek8}, and then run Algorithm \ref{alg:KalaiVempala} with $\bar{\epsilon} = 10^{-3}$ and $p = 10^{-1}$. The final gap $\langle c, x_{\text{final}} - x_* \rangle$ is shown in Figure \ref{fig:DNNSAGapCholesky}. One can see that for practical sample sizes and walk lengths, the method does not converge to the optimal solution.
\begin{figure}[h]
	\centering
	
	\def\speca{10}
	\def\specb{20}
	
	\foreach \n in {5,10,15,20} {
		\begin{tikzpicture}
		\begin{loglogaxis}[ylabel={$\langle c, x_{\text{final}} - x_* \rangle$}, xlabel={Sample size $N$}, small, title={$n = \n$}, minor y tick num = 4, legend pos = outer north east, y tick label style={xshift=-0.6em}]
		\foreach \length in {5, 10, 20, 40, 80, 160, 320, 640, 1280} {
			\addplot+[
			discard if not={ell}{\length},
			restrict expr to domain={\thisrow{n}}{\n:\n},
			] table[x=S,y=gap] {auto_res_kalai_vempala_dnn_cholesky_gap.txt};
			
			\ifx\n\speca \addlegendentryexpanded{$\ell = \length$} \else\fi
			\ifx\n\specb \addlegendentryexpanded{$\ell = \length$} \else\fi
		}
		\end{loglogaxis}
		\end{tikzpicture}
	}
	\caption{Effect of sample size $N$ and walk length $\ell$ on the final gap of Algorithm \ref{alg:KalaiVempala}}
	\label{fig:DNNSAGapCholesky}
\end{figure}

\subsection{Kalai-Vempala Algorithm with Acceleration Heuristic}
Keeping our findings above in mind, we propose 
the heuristic adaption of of Algorithm \ref{alg:KalaiVempala} presented in Algorithm \ref{alg:HeuristicKalaiVempala}.
The main modifications to Algorithm \ref{alg:KalaiVempala} are:
\begin{enumerate}
	\item Use the (centered) samples generated in the previous iteration as directions for hit-and-run in the current iteration. 
This would eliminate the need to estimate the covariance matrix of a distribution, only to then draw samples from that same distribution. 
Instead, we can also draw directions directly from the centered samples (cf.\ line 10 in Algorithm \ref{alg:HeuristicKalaiVempala}). Thus each sample is used to generate a hit-and-run direction with uniform probability.
	\item Start a random walk from the end point of the previous random walk, rather than from a common starting point. The idea here is that the random sample as a whole will exhibit less dependence, thus improving the approximation quality of the empirical distribution.
	\item As a starting point for the walk in some iteration $k$, use the sample mean from iteration $k-1$. While this does significantly change the distribution of the starting point, it concentrates more probability mass around the mean of the Boltzmann distribution with parameter $\theta_{k-1}$, such that the starting point of the random walk is likely already close to the mean of the Boltzmann distribution with parameter $\theta_k$. In a similar vein, we return the mean of the samples in the final iteration, not just one sample. This will not change the expected objective value of the final result, and will therefore also not affect the probabilistic guarantee that we derived in \eqref{eq:MarkovInequality} by Markov's inequality. However, using the mean does reduce the variance in the final solution.
\end{enumerate}
The modified algorithm is given in full in Algorithm \ref{alg:HeuristicKalaiVempala}.

\begin{algorithm}[ht!]
	\caption{Heuristic adaptation of Algorithm \ref{alg:KalaiVempala}}
	\label{alg:HeuristicKalaiVempala}
	\begin{algorithmic}[1]
		\Input
		unit vector $c \in \mathbb{R}^n$; membership oracle $\mathcal{O}_K: \mathbb{R}^n \to \{0,1\}$ of a convex body $K$; radius $R$ of Euclidean ball containing $K$;
		complexity parameter $\vartheta \leq n + o(n)$ of the entropic barrier over $K$; update parameter $\alpha > 1+ 1/\sqrt{\vartheta}$;
		error tolerance $\bar{\epsilon} > 0$; failure probability $p \in (0,1)$; number of hit-and-run steps $\ell \in \mathbb{N}$; number of samples $ N \in \mathbb{N}$;
		$y_1, ..., y_N \in K$ drawn randomly from the uniform distribution over $K$.
		\Statex
		
		\State $Y_{j0} \gets y_j$ for all $j \in \{1, ..., N\}$
		\State $X_0 \gets \frac{1}{N} \sum_{j=1}^N Y_{j0}$
		\State $\theta_0 \gets 0$
		\State $T_0 \gets \infty, T_1 \gets R$
		\State $k \gets 1$
		\While{$n T_{k-1} > \bar{\epsilon} p$} \label{line:TerminationCondition} 
		\State $\theta_k \gets -c / T_k$
		\State $Y_{0 k} \gets X_{k-1}$
		\For{$j \in \{1, ..., N\}$}
		\State \algmultilinetwo{Generate $Y_{jk}$ by applying hit-and-run sampling to the Boltzmann distribution with parameter $\theta_k$, starting the walk from $Y_{j-1,k}$, taking $\ell$ steps, drawing directions uniformly from 
		$\{ Y_{1, k-1} - X_{k-1}, ..., Y_{N, k-1} - X_{k-1} \}$} 
		\EndFor
		\State $X_k \gets \frac{1}{N} \sum_{j=1}^N Y_{jk}$
		\State $T_{k+1} \gets \min\{ R (1 - \frac{1}{\alpha \sqrt{\vartheta}})^{k}, R(1-\frac{1}{\sqrt{n}})^k\}$
		\State $k \gets k+1$
		\EndWhile
		\State \Return $X_{k-1}$
	\end{algorithmic}
\end{algorithm}

With these modifications implemented, we can no longer study the quality of the covariance matrix. Therefore, we will simply consider if the resulting optimization algorithm leads to a small error in the objective value. We solve the problem from Section \ref{subsec:KalaiVempalaDNNCholesky} with Algorithm \ref{alg:HeuristicKalaiVempala}. The results are given in Figure \ref{fig:DNNSAGapEmpirical}.

\begin{figure}[h]
	\centering
	
	\def\speca{10}
	\def\specb{20}
	
	\foreach \m in {5,10,15,20} {
		\begin{tikzpicture}
			\begin{loglogaxis}[ylabel={$\langle c, x_{\text{final}} - x_* \rangle$}, xlabel={Sample size $N$}, small, title={$m = \m$}, minor y tick num = 4, legend pos = outer north east]
			\foreach \length in {5, 10, 20, 40, 80, 160, 320, 640, 1280} {
				\addplot+[
				discard if not={ell}{\length},
				restrict expr to domain={\thisrow{n}}{\m:\m},
				] table[x=S,y=gap] {auto_res_kalai_vempala_dnn_empirical_gap.txt};
				
				\ifx\m\speca \addlegendentryexpanded{$\ell = \length$} \else\fi
				\ifx\m\specb \addlegendentryexpanded{$\ell = \length$} \else\fi
			}
			\end{loglogaxis}
%
		\end{tikzpicture}
	}
	\caption{Effect of sample size $N$ and walk length $\ell$ on the final gap of Algorithm \ref{alg:HeuristicKalaiVempala}}
	\label{fig:DNNSAGapEmpirical}
\end{figure}

For low dimensions in particular, the proposed changes seem to have a positive effect.
It can be seen from Figure \ref{fig:DNNSAGapEmpirical} that -- roughly speaking -- the final gap $\langle c, x_{\text{final}} - x_* \rangle$ takes values between two extremes. At one end, the method does not converge and the final gap is still of the order $10^{-1}$. At the other end, the method does converge to the optimum, such that the gap is of the order $10^{-4} = \bar{\epsilon} p$. Note that $\bar{\epsilon} p$ is exactly the size we would like the expected gap to have to guarantee that the gap is smaller than $\bar{\epsilon}$ with probability $1-p$ by Markov's inequality.
Whether we are at one end or the other depends on $N$ and $\ell$ being large enough compared to $m$. As a heuristic, we propose that
\begin{equation}
	N = \left\lceil n \sqrt{n} \right\rceil, \qquad \ell = \left\lceil n \sqrt{n} \right\rceil,
	\label{eq:HeuristicNEll}
\end{equation}
where $n = m(m+1)/2$ is the number of variables, are generally sufficient to ensure that the final gap is of the order $\bar{\epsilon} p$.

\section{Numerical Examples on the Copositive Cone}
\label{sec:Numerical Examples on the Copositive Cone}

We now turn our attention away from the doubly nonnegative cone, and towards the copositive cone.  Although deciding if a matrix is copositive is a co-NP-complete problem \cite{murty1987some}, there are a number of procedures to test for copositivity. Clearly, $A = [A_{ij}]_{ij} \in \mathbb{S}^{m \times m}$ is copositive if and only if
\begin{equation}
	\min \left\{ a^\top A a : e^\top a = 1, a \geq 0 \right\},
	\label{eq:CopositiveMemberQP}
\end{equation}
is nonnegative, where $e$ is the all-ones vector. Xia et al. \cite{xia2015globally} show that solving \eqref{eq:CopositiveMemberQP} is equivalent to solving
\begin{align}
	\begin{aligned}
		\min\, & - \nu\\
		\text{s.t.}\, & A a + \nu e - \eta = 0\\
		& e^\top a = 1\\
		& 0 \leq a \leq b\\
		& 0 \leq \eta \leq M(e - b)\\
		& b \in \{0,1\}^n,
	\end{aligned}
	\label{eq:CopositiveMemberMILP}
\end{align}
where $M = 2 m \max_{i,j \in \{1, ..., m\}} |A_{ij}|$. (To be precise, every optimal solution $(a, \nu, \eta)$ to \eqref{eq:CopositiveMemberMILP} gives an optimal solution $a$ to \eqref{eq:CopositiveMemberQP}, and these two problems have the same optimal values.)
Note that we are generally not interested in solving \eqref{eq:CopositiveMemberMILP} to optimality: it suffices to find a feasible solution of \eqref{eq:CopositiveMemberMILP} with a negative objective value, or confirm that no such solution exists. For the majority of the matrices encountered by Algorithm \ref{alg:HeuristicKalaiVempala} applied to our test sets described below, this could be checked quickly.

\subsection{Separating from the Completely Positive Cone}
\label{subsec:CompletelyPositive}
Recall that a matrix $A \in \mathbb{S}^{m \times m}$ is completely positive if $A = B B^\top$ for
some $B \geq 0$. It is easily seen that optimization problems over the completely positive cone can
be relaxed as optimization problems over the doubly nonnegative cone.
 To strengthen this relaxation, one could add a cutting plane separating the optimal solution $Y$ of the doubly nonnegative
  relaxation from the completely positive cone. This is listed as an open (computational) problem by Berman et al. \cite[Section 5]{berman2015open},
 who note that the problem of generating such a cut has only been answered for specific structures of $Y$,
   including $5 \times 5$ matrices \cite{burer2013separation}. In general,
    such a cut could be generated for a doubly nonnegative matrix $Y$ by the copositive program
\begin{equation}
	\inf \left\{ \langle Y, X \rangle : \langle X, X \rangle \leq 1, X \text{ copositive} \right\}.
	\label{eq:CompletelyPositiveSeparation}
\end{equation}
Below, we solve this problem for $6 \times 6$ matrices, by way of example.

To generate test instances, we are interested in matrices on the boundary of the $6 \times 6$ doubly nonnegative cone.
The extreme rays
 of this cone are described by Ycart \cite[Proposition 6.1]{ycart1982extremales}. We generate random instances from the class of matrices described under case 3, graph 4 in Proposition 6.1 in \cite{ycart1982extremales}. These matrices are (up to permutation of the indices) doubly nonnegative matrices $Y = [Y_{ij}]_{ij}$ with rank 3 satisfying $Y_{i,i+1} = 0$ for $i = 1, ..., 5$. To generate such a matrix, we draw the elements of two vectors $v_1, v_2 \in \mathbb{R}^6$ and the first element $(v_3)_1 \in \mathbb{R}$ of a vector $v_3 \in \mathbb{R}^6$ from a Poisson distribution with rate $1$, and multiply each of these elements with $-1$ with probability $\frac{1}{2}$.
The remaining elements of $v_3$ are then chosen such that $Y = \sum_{k=1}^3 v_k v_k^\top$ satisfies $Y_{i,i+1} = 0$ for $i = 1, ..., 5$. This procedure is repeated if the matrix $Y$ is not doubly nonnegative, or if BARON 15 \cite{tawarmalani2005polyhedral} could find a nonnegative matrix $B \in \mathbb{R}^{6\times 9}$ such that $Y = BB^\top$ in less than 30 seconds (for the cases where such a decomposition could be found, BARON terminated in less than a second). Thus, we are left with doubly nonnegative matrices for which it cannot quickly be shown that they are completely positive.

For ten of such randomly generated matrices
(see Appendix \ref{app:ExtremalDNNMatrices}), the optimal value of Algorithm \ref{alg:HeuristicKalaiVempala} applied to
 \eqref{eq:CompletelyPositiveSeparation} is given in Table \ref{tab:Separation}. This table shows the normalized objective value $\langle Y / \| Y\|, X^* \rangle$, where $Y$ is a doubly nonnegative matrix as described above,
  and $X^*$ is the final solution returned by Algorithm \ref{alg:HeuristicKalaiVempala}.
\begin{table}
	\centering
	\begin{tabular}{lllll}
		\toprule
		& \multicolumn{2}{c}{Final objective value (normalized)} & \multicolumn{2}{c}{Oracle calls}\\
		\cmidrule(lr){2-3} \cmidrule(lr){4-5}
		Name & Algorithm \ref{alg:HeuristicKalaiVempala} & Ellipsoid method & Algorithm \ref{alg:HeuristicKalaiVempala} & Ellipsoid method\\
  		\midrule
\texttt{extremal\_rand\_1}  & -7.626893e-03 &-7.667645e-03& 8.766473e+06 & 3.152000e+03\\
\texttt{extremal\_rand\_2}  & -1.983630e-02 &-1.987634e-02& 9.073317e+06 & 3.412000e+03\\
\texttt{extremal\_rand\_3}  & -3.591875e-02 &-3.596345e-02& 9.334264e+06 & 3.835000e+03\\
\texttt{extremal\_rand\_4}  & -9.937402e-03 &-9.980087e-03& 8.830209e+06 & 3.147000e+03\\
\texttt{extremal\_rand\_5}  & -5.897273e-03 &-5.940056e-03& 8.628287e+06 & 2.957000e+03\\
\texttt{extremal\_rand\_6}  & -4.303956e-02 &-4.307761e-02& 9.438518e+06 & 4.024000e+03\\
\texttt{extremal\_rand\_7}  & -2.411010e-02 &-2.415651e-02& 9.179767e+06 & 3.708000e+03\\
\texttt{extremal\_rand\_8}  & -6.822593e-02 &-6.826558e-02& 9.641288e+06 & 4.277000e+03\\
\texttt{extremal\_rand\_9}  & -4.232229e-02 &-4.236829e-02& 9.416909e+06 & 3.981000e+03\\
\texttt{extremal\_rand\_10} & -2.962993e-02 &-2.967333e-02& 9.236507e+06 & 3.743000e+03\\
		\bottomrule
	\end{tabular}
	\caption{Objective values returned by Algorithm \ref{alg:HeuristicKalaiVempala} and by the Ellipsoid method, applied to \eqref{eq:CompletelyPositiveSeparation}. Algorithm  \ref{alg:HeuristicKalaiVempala} was run with
 $\bar{\epsilon} = 10^{-3}$ and $p = 0.1$, and $N$ and $\ell$ as in \eqref{eq:HeuristicNEll}. The Ellipsoid method was run  with error tolerance $10^{-4}$.}
	\label{tab:Separation}
\end{table}
Note that in all cases, Algorithm \ref{alg:HeuristicKalaiVempala} succeeded in finding a copositive matrix $X^*$ such that $\langle Y, X^* \rangle < 0$, which means a cut separating $Y$ from the completely positive matrices was found.

Note that solving the MILP \eqref{eq:CopositiveMemberMILP} for a matrix $A$ that is not copositive
yields a hyperplane separating $A$ from the copositive cone. Thus, we can also solve problem \eqref{eq:CompletelyPositiveSeparation} with the ellipsoid method of Yudin and Nemirovski \cite{Yudin-Nemirovski 1976}, for example.
 For the sake of comparison, the results of the Ellipsoid method are also included in Table \ref{tab:Separation}. Note, in particular, that the number of oracle calls in Table \ref{tab:Separation} is
 several orders of magnitude smaller for the Ellipsoid method.
%

\section{Conclusion}
We have shown that Kalai and Vempala's algorithm \cite{kalai2006simulated} returns a solution which is near-optimal for \eqref{eq:MainProblem} with high
probability in polynomial time, when the temperature update \eqref{AH type temp schedule} is used.
The main drawback to using the algorithm in practice, is the large number of samples (i.e.\ membership oracle calls) required.
As a result, in our tests the Ellipsoid method outperformed Algorithm \ref{alg:HeuristicKalaiVempala} by a large margin.
Thus, based on our experiments, one would favor polynomial-time cutting plane methods like the Ellipsoid method, or more sophisticated alternatives
as described e.g.\ in \cite{faster cutting plane}.
In order to obtain a practically viable variant of the  Kalai-Vempala algorithm, one would have to improve the sampling process greatly, or utilize massive parallelism to speed up the hit-and-run sampling.

\subsubsection*{Acknowledgements}
The authors would like to thank S\'ebastien Bubeck, Osman G\"uler, and Levent Tun\c{c}el for valuable discussions about the complexity parameter of the entropic barrier.

\appendix

\section{The complexity parameter of the entropic barrier for the Euclidean ball}
\label{appendix:complexity parameter ball}

Let $B_n := \{ x \in \mathbb{R}^n: \| x \|^2 \leq 1 \}$ be the unit ball in $\mathbb{R}^n$ with respect to the inner product $\langle \cdot, \cdot \rangle$. We are interested in the complexity parameter $\vartheta$ of the entropic barrier on $B_n$. We will follow the notation from \cite{badenbroek2018complexity}.

Bubeck and Eldan \cite[Lemma 1(iii)]{bubeck2018entropic} show that the gradient $g^*$ of the entropic barrier is a bijection of $B_n$ to $\mathbb{R}^n$. Therefore,
\begin{equation*}
	\vartheta = \sup_{x \in B_n} (\| g_x^*(x) \|_x^*)^2 = \sup_{x \in B_n} \langle g^*(x), H^*(x)^{-1} g^*(x) \rangle = \sup_{\theta \in \mathbb{R}^n} \langle \theta, H(\theta) \theta \rangle,
\end{equation*}
where the final equality uses $H^*(x)^{-1} = H(\theta(x))$ (see \cite[Lemma 1(iv)]{bubeck2018entropic}).
Recall that $H(\theta)$ is the covariance operator of the Boltzmann distribution with parameter $\theta$. Then,
\begin{equation*}
	\langle \theta, H(\theta) \theta \rangle = \mathbb{E}_\theta[ \langle X - \mathbb{E}_\theta[X], \theta \rangle^2 ] = \mathbb{E}_\theta[ \langle \theta, X \rangle^2 ] - \langle \theta, \mathbb{E}_\theta[X] \rangle^2.
\end{equation*}

From now on, we will let $\langle \cdot, \cdot \rangle$ be the Euclidean inner product.
For every $\theta \in \mathbb{R}^n$, there exists a rotation matrix $Q$ with $| \det Q |= 1$ such that $\langle \theta, Q y \rangle = \| \theta \| y_1$ for all $y \in \mathbb{R}^n$. Using the fact that the volume of an $(n-1)$-dimensional ball with radius $r$ is $r^{n-1}$ times some factor depending only on $n$, we see that
\begin{equation*}
	\langle \theta, \mathbb{E}_\theta[X]\rangle = \frac{\int_{B_n} \langle \theta, x \rangle e^{\langle \theta, x \rangle} \diff x}{\int_{B_n} e^{\langle \theta, x \rangle} \diff x} = \frac{\int_{B_n} \| \theta \| y_1 e^{\| \theta \| y_1} \diff y}{\int_{B_n} e^{\| \theta \| y_1} \diff y}
	= \frac{\int_{-1}^1 \| \theta \| y_1 (\sqrt{1 - y_1^2})^{n-1} e^{\| \theta \| y_1} \diff y_1}{\int_{-1}^1 (\sqrt{1 - y_1^2})^{n-1} e^{\| \theta \| y_1} \diff y_1}.
\end{equation*}
The final expression cannot be computed in closed form, but it can be approximated numerically for fixed $\| \theta \|$. Similarly,
\begin{equation*}
	\mathbb{E}_\theta[ \langle \theta, X \rangle^2 ] = \frac{\int_{B_n} \langle \theta, x \rangle^2 e^{\langle \theta, x \rangle} \diff x}{\int_{B_n} e^{\langle \theta, x \rangle} \diff x} = \frac{\int_{B_n} \| \theta \|^2 y_1^2 e^{\| \theta \| y_1} \diff y}{\int_{B_n} e^{\| \theta \| y_1} \diff y}
	= \frac{\int_{-1}^1 \| \theta \|^2 y_1^2 (\sqrt{1 - y_1^2})^{n-1} e^{\| \theta \| y_1} \diff y_1}{\int_{-1}^1 (\sqrt{1 - y_1^2})^{n-1} e^{\| \theta \| y_1} \diff y_1}.
\end{equation*}

Numerical approximation of $\mathbb{E}_\theta[ \langle \theta, X \rangle^2 ] - \langle \theta, \mathbb{E}_\theta[X] \rangle^2$ for different values of $n$ and $\| \theta \|$ yields Figure \ref{fig:ApproxVarthetaBall}.
 This figure suggests that $\vartheta = \frac{1}{2} (n+1)$.

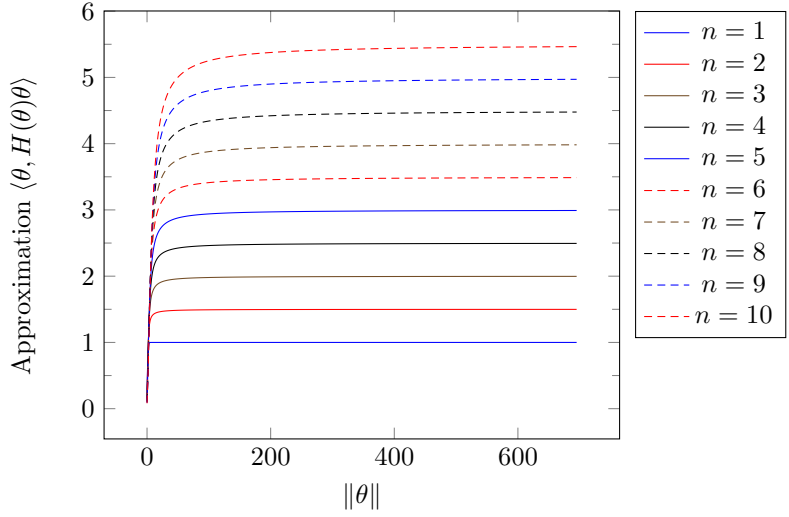
\begin{figure}
	\centering
	\begin{tikzpicture}
		\begin{axis}[legend pos = outer north east, minor y tick num=1, ytick distance = 1, ylabel = {Approximation $\langle \theta, H(\theta) \theta \rangle$}, xlabel = {$\|\theta \|$}]
			\foreach \n in {1,...,10} {
				\addplot+[no marks] table[x expr=\coordindex, y index=\n] {entropic_barrier_unit_ball_data.txt};
				\addlegendentryexpanded{$n = \n$};
			}
		\end{axis}
	\end{tikzpicture}
	\caption{Numerical approximation of $\langle \theta, H(\theta) \theta \rangle$}
	\label{fig:ApproxVarthetaBall}
\end{figure}

\section{Extremal Doubly Nonnegative Matrix Examples}
\label{app:ExtremalDNNMatrices}
Below are the ten randomly generated extreme points of the $6 \times 6$ doubly nonnegative cone that are used in Section \ref{subsec:CompletelyPositive}. These matrices can be strictly separated from the completely positive cone.

\begingroup
\allowdisplaybreaks
\begin{align*}
	\texttt{extremal\_rand\_1} &=
	\begin{bmatrix}
		2 & 0 & 6 & 0 & 1 & 2\\
0 & 6 & 0 & 8 & 1 & 2\\
6 & 0 & 18 & 0 & 3 & 6\\
0 & 8 & 0 & 11 & 0 & 3\\
1 & 1 & 3 & 0 & 6 & 0\\
2 & 2 & 6 & 3 & 0 & 3\\

	\end{bmatrix} &
	\texttt{extremal\_rand\_2} &=
	\begin{bmatrix}
		2 & 0 & 2 & 3 & 1 & 3\\
0 & 2 & 0 & 3 & 1 & 1\\
2 & 0 & 3 & 0 & 2 & 1\\
3 & 3 & 0 & 18 & 0 & 12\\
1 & 1 & 2 & 0 & 2 & 0\\
3 & 1 & 1 & 12 & 0 & 9\\

	\end{bmatrix} \\
	\texttt{extremal\_rand\_3} &=
	\begin{bmatrix}
		12 & 0 & 4 & 2 & 0 & 2\\
0 & 2 & 0 & 2 & 1 & 2\\
4 & 0 & 2 & 0 & 1 & 0\\
2 & 2 & 0 & 3 & 0 & 3\\
0 & 1 & 1 & 0 & 2 & 0\\
2 & 2 & 0 & 3 & 0 & 3\\

	\end{bmatrix} &
	\texttt{extremal\_rand\_4} &=
	\begin{bmatrix}
		2 & 0 & 2 & 2 & 2 & 4\\
0 & 8 & 0 & 4 & 4 & 8\\
2 & 0 & 3 & 0 & 4 & 0\\
2 & 4 & 0 & 8 & 0 & 16\\
2 & 4 & 4 & 0 & 8 & 0\\
4 & 8 & 0 & 16 & 0 & 32\\

	\end{bmatrix} \\
	\texttt{extremal\_rand\_5} &=
	\begin{bmatrix}
		5 & 0 & 5 & 0 & 5 & 3\\
0 & 6 & 0 & 10 & 1 & 18\\
5 & 0 & 5 & 0 & 5 & 3\\
0 & 10 & 0 & 20 & 0 & 42\\
5 & 1 & 5 & 0 & 6 & 0\\
3 & 18 & 3 & 42 & 0 & 99\\

	\end{bmatrix} &
	\texttt{extremal\_rand\_6} &=
	\begin{bmatrix}
		3 & 0 & 3 & 4 & 0 & 4\\
0 & 6 & 0 & 2 & 6 & 2\\
3 & 0 & 11 & 0 & 4 & 0\\
4 & 2 & 0 & 8 & 0 & 8\\
0 & 6 & 4 & 0 & 8 & 0\\
4 & 2 & 0 & 8 & 0 & 8\\

	\end{bmatrix} \\
	\texttt{extremal\_rand\_7} &=
	\begin{bmatrix}
		14 & 0 & 4 & 8 & 2 & 16\\
0 & 6 & 0 & 4 & 2 & 8\\
4 & 0 & 8 & 0 & 8 & 0\\
8 & 4 & 0 & 8 & 0 & 16\\
2 & 2 & 8 & 0 & 9 & 0\\
16 & 8 & 0 & 16 & 0 & 32\\

	\end{bmatrix} &
	\texttt{extremal\_rand\_8} &=
	\begin{bmatrix}
		6 & 0 & 4 & 2 & 0 & 2\\
0 & 5 & 0 & 2 & 2 & 2\\
4 & 0 & 6 & 0 & 2 & 0\\
2 & 2 & 0 & 2 & 0 & 2\\
0 & 2 & 2 & 0 & 2 & 0\\
2 & 2 & 0 & 2 & 0 & 2\\

	\end{bmatrix} \\
	\texttt{extremal\_rand\_9} &=
	\begin{bmatrix}
		2 & 0 & 2 & 0 & 4 & 2\\
0 & 2 & 0 & 2 & 2 & 0\\
2 & 0 & 2 & 0 & 4 & 2\\
0 & 2 & 0 & 3 & 0 & 2\\
4 & 2 & 4 & 0 & 14 & 0\\
2 & 0 & 2 & 2 & 0 & 6\\

	\end{bmatrix} &
	\texttt{extremal\_rand\_10} &=
	\begin{bmatrix}
		2 & 0 & 2 & 2 & 0 & 2\\
0 & 2 & 0 & 2 & 2 & 2\\
2 & 0 & 3 & 0 & 1 & 0\\
2 & 2 & 0 & 8 & 0 & 8\\
0 & 2 & 1 & 0 & 3 & 0\\
2 & 2 & 0 & 8 & 0 & 8\\

	\end{bmatrix} \\
\end{align*}
\endgroup

\bibliographystyle{abbrv}

\end{document}